\documentclass[a4paper]{article}
\usepackage{amsfonts}
\usepackage{amsmath}
\usepackage{indentfirst,latexsym,bm}
\usepackage{amssymb}
\usepackage{amsmath}
\usepackage{graphics}
\usepackage{fancyvrb}
\usepackage{color}
\usepackage{mathrsfs}
\usepackage{hyperref}

\numberwithin{equation}{section}

\newtheorem{theorem}{Theorem}

\newtheorem{assumption}{Assumption}

\newtheorem{condition}{Condition}

\newtheorem{definition}[theorem]{Definition}

\newtheorem{lemma}[theorem]{Lemma}

\newtheorem{remark}[theorem]{Remark}

\newenvironment{proof}[1][Proof]{\textbf{#1.} }{\ \rule{0.5em}{0.5em}}
\begin{document}

\title{{A Functional Approach to FBSDEs and Its Application in Optimal Portfolios}}

\date{}
\author{\emph{{\small {\textsc{By G. Liang\ \ T. Lyons\ \ and\ \ Z. Qian}}}} \\
\emph{{\small {University of Oxford, U.K.}}}} \maketitle
\begin{abstract}

In Liang et al (2009), the current authors demonstrated that BSDEs
can be reformulated as functional differential equations, and as an
application, they solved BSDEs on general filtered probability
spaces. In this paper the authors continue the study of functional
differential equations and demonstrate how such approach can be used
to solve FBSDEs. By this approach the equations can be solved in one
direction altogether rather than in a forward and backward way. The
solutions of FBSDEs are then employed to construct the weak
solutions to a class of BSDE systems (not necessarily scalar) with
quadratic growth, by a nonlinear version of Girsanov's
transformation. As the solving procedure is constructive, the
authors not only obtain the existence and uniqueness theorem, but
also really work out the solutions to such class of BSDE systems
with quadratic growth. Finally an optimal portfolio problem in
incomplete markets is solved based on the functional differential
equation
approach and the nonlinear Girsanov's transformation.\\

\noindent\emph{Keywords:} FBSDE, Quadratic BSDE, weak solution,
Girsanov's theorem, functional differential equation, optimal portfolio\\

\noindent\emph{Mathematical subject classifications (2000):} 60H30,
65C30, 91B28\\

\noindent\emph{JEL Classification:} C61, G11

\end{abstract}
\leftskip0truecm \rightskip0truecm
\newpage
\section{Introduction}

Backward stochastic differential equations (BSDE) provide a new
perspective to look at the infinitesimal behavior of Markov
processes, and they have been found intrinsically linked to a class
of nonlinear partial differential equations (PDEs). In a fundamental
paper by Pardoux sand Peng \cite{MR1037747}, they solved a class of
nonlinear BSDEs with Lipschitz drivers for the first time. An
intrinsic connection to nonlinear PDEs, which is now well known as
nonlinear Feynman-Kac formula, was later established by Peng
\cite{MR1149116} and Pardoux and Peng \cite{MR1176785}. Its
applications in finance were discovered by Duffie and Epstein
\cite{Duffie} and El Karoui et al \cite{MR1434407}. For a more
comprehensive review of the BSDE theory, we refer to \cite{El
Karoui}, \cite{MR1752671} and \cite{MR1696772} and reference
therein.

In Liang et al \cite{Liang}, the current authors demonstrate that
BSDEs can be reformulated as functional differential equations. As
an application we can solve BSDEs on general filtered probability
spaces, and in particular without the requirement of martingale
representation. In this paper we try to apply such \emph{functional
differential equation approach}, or \emph{functional approach} for
short, to forward backward stochastic differential equations
(FBSDEs), and demonstrate how it can be used to solve a financial
optimal portfolio problem in incomplete markets.

Let us recall such idea, which is the first main ingredient of the
paper. Let $(\Omega,\mathcal{F},\mathcal{F}_t,\mathbf{P})$ be a
filtered probability space satisfying the usual conditions. Given a
semimartingale $(Y_t)_{t\in[0,T]}$ with the following decomposition:
$$Y_t=M_t-V_t,\ \ \ \text{for}\ t\in[0,T],$$
where $M$ is local martingale and $V$ is a finite variation process,
if we further know the terminal data of $Y$, say $Y_T=\xi$ for some
$\mathcal{F}_T$-measurable random variable $\xi$, then we also have
$\xi=M_T-V_T$. Therefore
\begin{equation}\label{chap1:CrucialRelation}
\left\{
\begin{array}{ll}
Y_t=E[\xi+V_T|\mathcal{F}_t]-V_t,&\\[+0.1cm]
M_t=E[\xi+V_T|\mathcal{F}_t],&\text{for}\ t\in[0,T].
\end{array}
\right.
\end{equation}

Before we apply the above relationship (\ref{chap1:CrucialRelation})
to a specific FBSDE setting, let us first look at it from potential
theory point of view. For a given domain $D\subset R^d$, a
real-valued function $f$ is called superharmonic in $D$ if
$$\int_{\partial B(0,r)}f(x+y)\sigma_r(dy)\leq f(x),\ \ \
\text{for}\ x\in D\ \text{and}\ r<dist(x,\partial D),$$ where
$\sigma_r$ is the measure on the surface of the ball $\partial
B(0,r)$, normalized to have the total mass $1$. $f$ is called
harmonic if furthermore the equality holds. If $f$ is superharmonic
in $D$, then there exists a unique positive Borel measure on $D$
such that the following Riesz decomposition holds:
$$f(x)=h(x)+G_{D}\mu(x),\ \ \ \text{for}\ x\in D,$$
where $h$ is harmonic in $D$ and $G_D\mu$ is the Green's potential
of $\mu$ on $D$, i.e. $G_D\mu(x)=\int_{D}G(x,y)\mu(dy)$ with
$G(x,y)$ as the Green's function of the Laplace equation in $R^d$.
Given the boundary data of such superharmonic function $f$, by the
above Riesz decomposition, the Green's potential $G_D\mu$ is often
used to study $f$.

The probabilistic counterpart of the above Riesz decomposition is
the Doob-Meyer decomposition. For a supermartingale
$(Y_t)_{t\in[0,T]}$ with C\`adl\`ag sample paths, the Doob-Meyer
decomposition says that there exists a unique increasing predictably
measurable process $V$ starting from $V_0=0$ such that $M$ defined
by $M_t=Y_t+V_t$ for $t\in[0,T]$ is a martingale. The above
relationship (\ref{chap1:CrucialRelation}) we just established tells
us exactly the same thing as in the potential theory: given the
terminal data of a supermartingale (which is a semimartingale)
defined on $[0,T]$, we can study such supermartingale by
investigating the increasing predictably measurable process $V$.

Now we apply the above relationship (\ref{chap1:CrucialRelation}) to
a specific FBSDE, which will be served as an auxiliary equation
later. For any given filtered probability space
$(\Omega,\mathcal{F},\mathcal{F}_t,\mathbf{Q})$ satisfying the usual
conditions, with a $d$-dimensional Brownian motion
$B=(B^1,\cdots,B^d)^{*}$, (The superscript $^*$ denotes the matrix
transposition)
\begin{eqnarray}\label{MainFBSDE1}
    \left\{
    \begin{array}{ll}
    dX_t=f(t,Y_t,Z_t)dt+dB_t,\\[+0.2cm]
    X_0=x\in R^d,\\[+0.2cm]
    dY_t=-h(t,Y_t,Z_t)dt+Z_tdB_t,\\[+0.2cm]
    Y_T=\phi(X_T).
    \end{array}\right.
\end{eqnarray}
with the coefficients $f:[0,T]\times R^n\times R^{n\times
d}\rightarrow R^d$ , $h:[0,T]\times R^n\times R^{n\times
d}\rightarrow R^n$ and $\phi: R^d\rightarrow R^n$ satisfying
Condition \ref{Cond2.4} to be introduced later.

To solve FBSDE (\ref{MainFBSDE1}), for $\tau\in[0,T]$, we consider
the following functional differential equation on $[\tau,T]$:
\begin{equation}\label{MainFDE1}
V_t=\int_{\tau}^th(s,Y(V,X)_s,Z(V,X)_s)ds
\end{equation}
together with the forward process $X$:
\begin{equation}\label{MainSDE1}
X_t=x+\int_{\tau}^tf(s,Y(V,X)_s,Z(V,X)_s)ds+\int_{\tau}^tdB_s,
\end{equation}
where
\begin{equation}\label{relationship}
\left\{
\begin{array}{ll}
Y(V,X)_t=E[\phi(X_T)+V_T|\mathcal{F}_t]-V_t,\\[+0.2cm]
\displaystyle\int_{\tau}^TZ(V,X)_sdB_s=\phi(X_T)+V_T-E[\phi(X_T)+V_T|\mathcal{F}_{\tau}].
\end{array}
\right.
\end{equation}

If we can solve $(V,X)$ for functional differential equation
(\ref{MainFDE1}) (\ref{MainSDE1}) with $\tau=0$, then by Lemma
\ref{LemmaEQU} in Section 2.2, $(Y(V,X),Z(V,X),X)$ will provide us
with the solution to (\ref{MainFBSDE1}). For the notation's
simplicity, let us denote $\Psi=(V,X)^{*}$ and $F=(h,f)^{*}$. Then
(\ref{MainFDE1}) (\ref{MainSDE1}) are simplified to:
\begin{equation}\label{MainFDE2}
\Psi_t=\chi_x+\int_{\tau}^tF(s,Y(\Psi)_s,Z(\Psi)_s)ds+\int_{\tau}^t\chi_1dB_s
\end{equation}
where $\chi_x\in R^{n+d}$ with the first $n$ components being $0$
and next $d$ components being $x\in R^{d}$. Note that by functional
differential equation (\ref{MainFDE2}), we can solve all the
components $(Y,Z,X)$ in one direction altogether. We consider the
solutions to (\ref{MainFDE2}) in the following space:
\begin{itemize}
  \item $\mathcal{C}([0,T];R^{n+d})$: the space of continuous and
$\mathcal{F}_t$-adapted processes $(\Psi_t)_{t\in[0,T]}$ valued in
$R^{n+d}$ such that
$\sup_{t\in[0,T]}|\Psi_t|\in\mathcal{L}^2(\Omega,\mathcal{F}_T,\mathbf{Q})$
and endowed with the following norm:
$$||\Psi||_{\mathcal{C}[0,T]}=\sqrt{E\sup_{t\in[0,T]}|\Psi_t|^2}.$$
\end{itemize}

In Section 2.2 we will mainly solve functional differential equation
(\ref{MainFDE2}) and prove the following theorem:

\begin{theorem} If the coefficients satisfy Condition \ref{Cond2.4},
then there exists a unique solution $\Psi\in
\mathcal{C}([0,T];R^{n+d})$ to functional differential equation
(\ref{MainFDE2}).
\end{theorem}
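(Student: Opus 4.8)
The plan is to realise the solution $\Psi$ as the fixed point of a Picard-type map and to apply the Banach contraction principle in the complete space $\mathcal{C}([0,T];R^{n+d})$. Given a candidate $\Psi=(V,X)^{*}$, I would set $\xi=\phi(X_T)+V_T$, let $M_t=E[\xi\mid\mathcal{F}_t]$ be the associated martingale, and read off from (\ref{relationship}) the two maps $Y(\Psi)_t=M_t-V_t$ and the integrand $Z(\Psi)$ determined by $\int_{\tau}^{T}Z(\Psi)_s\,dB_s=\xi-E[\xi\mid\mathcal{F}_{\tau}]$. The solution map $\Gamma$ is then defined by taking $\Gamma\Psi$ to be the right-hand side of (\ref{MainFDE2}), namely $\Gamma\Psi_t=\chi_x+\int_{\tau}^{t}F(s,Y(\Psi)_s,Z(\Psi)_s)\,ds+\int_{\tau}^{t}\chi_1\,dB_s$, so that a solution of (\ref{MainFDE2}) is exactly a fixed point of $\Gamma$.

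First I would verify that $\Gamma$ maps $\mathcal{C}([0,T];R^{n+d})$ into itself. Doob's $\mathcal{L}^2$ inequality bounds $E\sup_t|M_t|^2$ by $CE|\xi|^2$, Itô's isometry bounds $E\int_{\tau}^{T}|Z(\Psi)_s|^2\,ds$ by $E|\xi|^2$, and the growth and integrability requirements in Condition \ref{Cond2.4}, together with the Burkholder--Davis--Gundy inequality for the stochastic integral and Cauchy--Schwarz for the drift integral, control the two time-integrals. Combining these shows $\sup_t|\Gamma\Psi_t|\in\mathcal{L}^2(\Omega,\mathcal{F}_T,\mathbf{Q})$, so $\Gamma\Psi\in\mathcal{C}([0,T];R^{n+d})$.

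The heart of the argument is the Lipschitz dependence of $(Y(\Psi),Z(\Psi))$ on $\Psi$. For two inputs $\Psi,\Psi'$ write $\delta\xi=\phi(X_T)-\phi(X'_T)+(V_T-V'_T)$; the Lipschitz property of $\phi$ from Condition \ref{Cond2.4} gives $E|\delta\xi|^2\le C\,\|\Psi-\Psi'\|_{\mathcal{C}[0,T]}^2$. Doob's inequality then yields $E\sup_t|\delta Y_t|^2\le C\|\Psi-\Psi'\|_{\mathcal{C}[0,T]}^2$, while Itô's isometry applied to $\int_{\tau}^{T}\delta Z_s\,dB_s=\delta\xi-E[\delta\xi\mid\mathcal{F}_{\tau}]$ gives $E\int_{\tau}^{T}|\delta Z_s|^2\,ds\le E|\delta\xi|^2\le C\|\Psi-\Psi'\|_{\mathcal{C}[0,T]}^2$. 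Since the Brownian term $\int_{\tau}^{t}\chi_1\,dB$ is common to both images and cancels in the difference, only the drift difference survives; using the Lipschitz property of $F$ and Cauchy--Schwarz in time I expect to reach
$$\|\Gamma\Psi-\Gamma\Psi'\|_{\mathcal{C}[0,T]}^2\le C\,(T-\tau)\,\|\Psi-\Psi'\|_{\mathcal{C}[0,T]}^2.$$
With the smallness built into Condition \ref{Cond2.4} (on the Lipschitz and coupling constants, or equivalently on the horizon) this makes $\Gamma$ a strict contraction, and its unique fixed point is the desired $\Psi$.

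The main obstacle is the forward--backward coupling itself, encoded in the anticipating term $E[\cdot\mid\mathcal{F}_t]$: because $\xi=\phi(X_T)+V_T$ depends on the terminal values of the unknown, neither the martingale estimate nor the Itô-isometry bound for $Z$ acquires a factor from the length of the interval on its own, and such a factor is recovered only after the outer time integration. For this reason the estimate above cannot be promoted to a global contraction for arbitrary $T$ and arbitrary Lipschitz constants merely by inserting an exponential weight $e^{\beta t}$, as one does for a pure BSDE, since such a weight at small $t$ cannot dampen a contribution that genuinely originates at the terminal time $T$. I therefore expect Condition \ref{Cond2.4} to supply the structural smallness that makes $C(T-\tau)<1$ on all of $[0,T]$, under which the Banach fixed point theorem delivers existence and uniqueness directly; this is the route I would follow, and verifying that the stated hypotheses are exactly what is needed to close it is the delicate part.
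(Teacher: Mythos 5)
Your fixed-point setup and local estimates are essentially the paper's Lemma \ref{lemmaforlocal}: the map $\Gamma$, the self-mapping bounds via Doob's inequality and It\^o's isometry, and the contraction estimate with the factor coming from Cauchy--Schwarz on the drift integral all match what the paper does on a short interval. The genuine gap is in your last step. Condition \ref{Cond2.4} contains \emph{no} smallness assumption at all: it asserts only continuity, Lipschitz continuity with constants $C_1,C_2$, and uniform boundedness of $\phi$; the horizon $T$ and all constants are arbitrary, and the theorem claims a solution on all of $[0,T]$. So your hope that ``Condition \ref{Cond2.4} supplies the structural smallness that makes $C(T-\tau)<1$ on all of $[0,T]$'' is a misreading of the hypothesis, and your argument as written proves only local existence and uniqueness on $[\tau,T]$ with $\sqrt{T-\tau}\leq\frac{1}{8C_1(1+C_2)}\wedge 1$. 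You correctly diagnosed that the exponential-weight trick cannot rescue the estimate (the terminal coupling $\phi(X_T)+V_T$ feeds back into every time $t$), but you stopped exactly where the real work begins.

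What closes the gap in the paper is a structural input absent from your proposal: a uniform gradient bound on the Markovian decoupling field. By the Markov property one writes $Y_t=\Phi(t,X_t)$, and a regularity theorem of Delarue (quoted as Lemma \ref{Delarue}) gives $|\Phi(t,x)|,\ |\nabla_x\Phi(t,x)|\leq C_4$ with $C_4$ depending only on $C_1,C_2$, the bound $M$, the dimensions and $T$. The paper then partitions $[0,T]$ with mesh $\sqrt{|\pi|}=\frac{1}{8C_1(1+C_4)}\wedge 1$ and solves the functional differential equation backward, subinterval by subinterval, using $\Phi_i(t_i,\cdot)$ as the terminal function on $[t_{i-1},t_i]$; since each $\Phi_i$ is Lipschitz with the \emph{same} constant $C_4$, the local contraction (with $C_2$ replaced by $C_4$) applies on every subinterval of this fixed mesh, and finitely many steps cover $[0,T]$. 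Finally the local pieces are pasted together, shifting the $V$-paths so that starting values match, and one verifies that the glued pair $(V,X)$ satisfies (\ref{MainFDE2}) on all of $[0,T]$. Without an a priori bound of this kind, the backward iteration would produce terminal Lipschitz constants that may grow at each step, forcing the subinterval lengths to shrink and the construction to stall before reaching time $0$ --- which is precisely why fully coupled FBSDEs can fail to be solvable on long horizons in general, and why some such structural estimate is indispensable here.
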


The second main ingredient of this paper is a nonlinear version of
Girsanov's transformation, which is employed to connect the above
FBSDE (\ref{MainFBSDE1}) and a class of BSDE systems with quadratic
growth. Namely we will use strong solutions of FBSDEs to construct
weak solutions to a class of BSDE systems with quadratic growth.

In the PDE theory, if the nonlinear terms in equations have at most
quadratic growth with respect to the gradient of solutions, the
nature of the equations completely change. In the BSDE theory, there
is a class of BSDEs with quadratic growth corresponding to such
PDEs, and they are usually called quadratic BSDEs. The study of
quadratic BSDEs was initialized by Kobylanski \cite{MR1782267} using
the idea of the Cole-Hopf transformation adapted from the PDE
theory. Her result was substantially developed and generalized by
Briand and Hu \cite{MR2257138} and \cite{MR2391164}, where they
extended to the equations with the unbounded terminal data and with
the convex driver. On the other hand, Quadratic BSDEs have found a
lot of applications in finance. For example, they appear naturally
when one wants to derive the value function for the maximization of
expected utility, use indifference pricing idea to hedge a
contingent claim written on nontradeable underlying assets, or
consider the risk measure. See for example \cite{Jeanblanc}
\cite{Martin} \cite{MR2152241} \cite{Mania} \cite{MR2465489} and
\cite{MR1802922}.

In this paper we mainly consider the following quadratic BSDE system
(not necessarily scalar):
\begin{eqnarray}\label{MainqBSDE1}
    \left\{
    \begin{array}{ll}
    dY_t=-h(t,Y_t,Z_t)dt-Z_tf(t,Y_t,Z_t)dt+Z_tdW_t,\\[+0.2cm]
    Y_T=\phi(W_T)
    \end{array}\right.
\end{eqnarray}
where $W=(W^1,\cdots,W^d)^{*}$ is a $d$-dimensional Brownian mtion
starting from $x\in R^d$. The coefficients $h$, $f$ and $\phi$ are
supposed to satisfy the following condition:

\begin{condition}\label{Cond2.4}
All the coefficients $h:[0,T]\times R^n\times R^{n\times
d}\rightarrow R^n$, $f:[0,T]\times R^n\times R^{n\times
d}\rightarrow R^d$ and $\phi: R^d\rightarrow R^n$ are continuous.
Moreover $h$, $f$ and $\phi$ are Lipschitz continuous, i.e.
\begin{eqnarray*}
    \left\{
    \begin{array}{ll}
|h(t,y,z)-h(t,\bar{y},\bar{z})|\leq
C_1(|y-\bar{y}|+|z-\bar{z}|),\\[+0.2cm]
|f(t,y,z)-f(t,\bar{y},\bar{z})|\leq
C_1(|y-\bar{y}|+|z-\bar{z}|),\\[+0.2cm]
|\phi(x)-\phi(\bar{x})|\leq C_2|x-\bar{x}|,
\end{array}\right.
\end{eqnarray*}
and $\phi$ is uniformly bounded,
$$\sup_{x\in R^d}|\phi(x)|\leq M,$$
for $t\in[0,T]$, $y,\bar{y}\in R^n$, $z,\bar{z}\in R^{n\times d}$
and $x,\bar{x}\in R^d$.
\end{condition}

Because of the terms with the coefficient $f$, the equations have at
most quadratic growth, i.e. there exists a constant $C_3$ such that
for any $y\in R^n$ and $z\in R^{n\times d}$,
$$|zf(t,y,z)|\leq C_3|z|(t+|y|+|z|).$$
The quadratic growth term in (\ref{MainqBSDE1}) is more special than
the usual one considered in the literature. However this special
structure is enough to cover the most examples of quadratic BSDEs
known in finance, at least with some extra conditions added. We will
consider one specific example from optimal portfolio problems in
Section 4. Moreover, all the excising results of quadratic BSDEs are
only for the case $n=1$. The current paper seems to be the first
attempt to consider the quadratic BSDE systems.

On the other hand, one may wonder why the terminal data has the
special form $\phi(W_T)$. This is only for the presentation's
simplicity. The whole paper's results can be extended without
difficulty to the case $\phi(X_T)$ where $X$ is driven by stochastic
differential equations (SDEs):
$$dX_t^i=X_t^i(b_t^idt+\sigma_t^idW_t),\ \ \ \text{for}\ i=1,\cdots,m.$$
with the coefficients $b^i$ and $\sigma^i$ satisfying certain
regularity conditions.

To solve (\ref{MainqBSDE1}), we will pursue another direction
different from the existing method for quadratic BSDEs. Namely we
don't use the Cole-Hopf transformation at all and don't assume the
underlying probability space and Brownian motion as any given;
instead we consider weak solutions of quadratic BSDEs.

Before presenting the definition of weak solutions to
(\ref{MainqBSDE1}), let us mention some already existing work about
weak solutions. One of the first attempts to introduce the weak
solutions for BSDEs was Buckdahn et al \cite{MR2141331}, and
Buckdahn and Engelbert \cite{MR2354579} further proved the
uniqueness of their weak solutions. However the driver of their BSDE
does not evolve the martingale representation part $Z$. On the other
hand, the notion of weak solutions for FBSDEs was introduced by
Antonelli and Ma \cite{MR1978231} and further developed by Delarue
and Guatteri \cite{MR2307056}, and by Ma et al \cite{MR2478677} and
Ma and Zhang \cite{MaJin1} who employed the martingale problem
approach.

\begin{definition}\label{Def}
A weak solution to BSDE (\ref{MainqBSDE1}) is a triple
$(\Omega,\mathcal{F},\mathbf{P}^x)$, $\{\mathcal{F}_t\}$ and
$(Y,Z^{\mathbf{P}^x},W)$ such that

(1) $(\Omega,\mathcal{F},\mathbf{P}^x)$ is a complete probability
space with the filtration $\{\mathcal{F}_t\}$ satisfying the usual
conditions;

(2) under such filtered probability space, $Y,Z^{\mathbf{P}^x}$ and
$W$ are $\mathcal{F}_t$-adapted, and $Y$ is a special
semimartingale, $Z^{\mathbf{P}^x}$ is the density representation of
$Y$ under $\mathbf{P}^x$, and $W$ is a Brownian motion starting from
$\mathbf{P}^x(W_0=x)=1$;

(3) The increments $\{W_u-W_t: t\leq u\leq T\}$ must be independent
of the $\sigma$-algebra $\mathcal{F}_t$;

(4) the following integral equation satisfies:
\begin{equation}\label{MainqBSDE2}
Y_t=\phi(W_T)+\int_t^Th(s,Y_s,Z^{\mathbf{P}^x}_s)ds+\int_t^T
Z^{\mathbf{P}^x}_sf(s,Y_s,Z^{\mathbf{P}^x}_s)ds-\int_t^T
Z^{\mathbf{P}^x}_sdW_s.
\end{equation}
\end{definition}

\begin{remark} In this paper, the density representation $Z^{\mathbf{P}^x}$
means it is the density representation for the martingale part of
the special semimartingale $Y$, and we use the superscript
$\mathbf{P}^x$ to emphasize the dependency of the density
representation on the probability measure $\mathbf{P}^x$.
\end{remark}

\begin{remark} Our definition of weak solutions is more related to
Buckdahn et al \cite{MR2141331}. The filtration $\{\mathcal{F}_t\}$
plays an important role here. If $\mathcal{F}_t=\mathcal{F}_t^W$,
i.e. the filtration is generated by the Brownian motion $W$
augmented by the $\mathbf{P}^x$-null sets in $\mathcal{F}$, the
solution turns to be a strong solution. In Ma and Zhang
\cite{MaJin1}, such solution is also called a semi-strong solution.
Actually the smallest filtration for weak solutions is the
filtration $\{\mathcal{F}^{W,Y,Z}_t\}$ generated by $W,Y,Z$ and
satisfying the usual conditions.
\end{remark}

\begin{remark}
Condition (3) automatically holds given the Brownian motion $W$ with
the filtration $\{\mathcal{F}_t\}$. In fact such condition simply
means $\{\mathcal{F}_t\}$ consists, additionally to
$\{\mathcal{F}_t^{W}\}$, only of independent experiments. In
Buckdahn et al \cite{MR2141331}, such condition is formulated in
terms of martingales, i.e. any $\mathcal{F}_t^W$-martingale must be
an $\mathcal{F}_t$-martingale. In Kurtz \cite{MR2336594}, such kind
of condition is called the compatibility constraint. (3) is
extremely useful when we want to identify weak solutions are strong
solutions.
\end{remark}

For the notation's simplicity, we will suppress the superscript $x$
of $\mathbf{P}^x$ from now on if no confusion may arise. Now we
describe our idea formally before presenting the existence and
uniqueness theorem of BSDE (\ref{MainqBSDE1}). The basic idea is
using the strong solution of FBSDE to construct the weak solution to
quadratic BSDE. Let us start with a Brownian motion family $B$ on
$(\Omega,\mathcal{F},\mathbf{Q})$ with the filtration
$\{\mathcal{F}_t\}$ satisfying the usual conditions, and consider
FBSDE (\ref{MainFBSDE1}).

Suppose FBSDE (\ref{MainFBSDE1}) admits a unique solution
$(X,Y,Z^{\mathbf{Q}})$. Then we define a new probability measure
$\mathbf{P}$ by
$$\frac{d\mathbf{P}}{d\mathbf{Q}}=\mathscr{E}(N)$$
where $\mathscr{E}(N)$ is the Dol\'eans-Dade exponential of $N$ with
$$N=-\int_0^{\cdot}\langle f(s,Y_s,Z_s^{\mathbf{Q}}), dB_s\rangle_d$$
where $\langle\cdot,\cdot\rangle_d$ denotes the inner product in
$R^d$. Under the new probability measure $\mathbf{P}$, by the
Girsanov's theorem, $B$ has the following decomposition:
\begin{align*}
B=&\ (B-[B,N])+[B,N],\\
=&\
\left(B+\int_0^{\cdot}f(s,Y_s,Z_s^{\mathbf{Q}})ds\right)-\int_0^{\cdot}f(s,Y_s,Z_s^{\mathbf{Q}})ds
\end{align*} where
$B-[B,N]=B+\int_0^{\cdot}f(s,Y_s,Z_s^{\mathbf{Q}})ds$ is a
martingale under $\mathbf{P}$, and furthermore by the L\'evy's
characterization, it is in fact a Brownian motion under
$\mathbf{P}$. We further define $W$ by $W=x+B-[B,N]$.

Under the probability measure $\mathbf{P}$ and with the Brownian
motion $W$, let's rewrite the backward equation in FBSDE
(\ref{MainFBSDE1}):
$$dY_t=-h(t,Y_t,Z_t^{\mathbf{Q}})dt-
Z_t^{\mathbf{Q}}f(t,Y_t,Z_t^{\mathbf{Q}})dt+ Z_t^{\mathbf{Q}}dW_t$$
with $Y_T=\phi(W_T)$. If we can prove
$Z^{\mathbf{Q}}=Z^{\mathbf{P}}$, then triple
$(\Omega,\mathcal{F},\mathbf{P})$, $\{\mathcal{F}_t\}$ and
$(Y,Z^{\mathbf{P}},W)$ is just one weak solution we want to find.

There are mainly three steps needed to be verified for the above
solving procedure. The first step is about the invariant property of
the density representation under the change of probability measure,
i.e. $Z^{\mathbf{Q}}=Z^{\mathbf{P}}$; The second step is of course
the solvability of FBSDE (\ref{MainFBSDE1}); the last step is the
Dol\'eans-Dade exponential $\mathscr{E}(N)$ must be a
uniform-integrable martingale to guarantee $\mathbf{P}$ is a
probability measure. As long as the above three steps are verified,
we have the following theorem:

\begin{theorem}\label{maintheorem}
If the coefficients satisfy Condition \ref{Cond2.4}, then there
exists at least one weak solution $(\Omega,\mathcal{F},\mathbf{P})$,
$\{\mathcal{F}_t\}$ and $(Y,Z^{\mathbf{P}},W)$ to BSDE
(\ref{MainqBSDE1}).
\end{theorem}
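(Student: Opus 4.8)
The plan is to execute the three-step programme outlined above, converting the strong solution of the Lipschitz FBSDE (\ref{MainFBSDE1}) into a weak solution of the quadratic system (\ref{MainqBSDE1}) by a Girsanov change of measure. First I would settle solvability (the second of the three steps): starting from a $d$-dimensional Brownian motion $B$ on $(\Omega,\mathcal{F},\mathbf{Q})$ with a filtration $\{\mathcal{F}_t\}$ satisfying the usual conditions, the first Theorem yields a unique $\Psi=(V,X)^{*}\in\mathcal{C}([0,T];R^{n+d})$ solving (\ref{MainFDE2}) with $\tau=0$, and Lemma \ref{LemmaEQU} converts this into a unique solution $(X,Y,Z^{\mathbf{Q}})$ of (\ref{MainFBSDE1}), for which $Y_t=E^{\mathbf{Q}}[\phi(X_T)+\int_t^Th(s,Y_s,Z^{\mathbf{Q}}_s)\,ds\mid\mathcal{F}_t]$. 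Note that $h$ and $f$ do not depend on $X$, so the forward process enters only through the bounded terminal value $\phi(X_T)$.

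The analytic heart of the proof is a pair of coupled a priori estimates that make the subsequent change of measure legitimate, and this is the step I expect to be the main obstacle. I would show that $Y$ is bounded and that $\int_0^{\cdot}Z^{\mathbf{Q}}\,dB$ is a BMO martingale under $\mathbf{Q}$. Applying It\^o's formula to $|Y_t|^2$ and conditioning on $\mathcal{F}_\tau$ over $[\tau,T]$ gives, for every stopping time $\tau$, the inequality $E^{\mathbf{Q}}[\int_\tau^T|Z^{\mathbf{Q}}_s|^2\,ds\mid\mathcal{F}_\tau]\leq M^2+2\|Y\|_\infty\,E^{\mathbf{Q}}[\int_\tau^T|h(s,Y_s,Z^{\mathbf{Q}}_s)|\,ds\mid\mathcal{F}_\tau]$, and the linear growth of $h$ together with Young's inequality lets me absorb the resulting $|Z^{\mathbf{Q}}|^2$ term to bound $\|\int Z^{\mathbf{Q}}dB\|_{BMO}$ in terms of $\|Y\|_\infty$; conversely, the representation of $Y$ above, with Cauchy--Schwarz and the BMO bound, controls $\|Y\|_\infty$ in terms of $M$, $\|\int Z^{\mathbf{Q}}dB\|_{BMO}$ and $T$. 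On a short enough interval these two estimates close, and since $\phi$ is uniformly bounded I can propagate the bound by partitioning $[0,T]$ into finitely many such intervals and concatenating. The delicate point, and the reason this cannot be lifted verbatim from the scalar theory, is that (\ref{MainqBSDE1}) is a genuine system: there is no comparison theorem and no scalar Cole--Hopf transform, so the boundedness of $Y$ must be wrung out of the coupled $L^\infty$--BMO estimate, which I would propagate through the fixed-point iteration underlying the first Theorem in order to avoid circularity.

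With these bounds in hand, the remaining two steps are comparatively routine. For the uniform integrability (the third step), the Lipschitz and linear-growth properties of $f$ give $|f(s,Y_s,Z^{\mathbf{Q}}_s)|^2\leq C(1+|Y_s|^2+|Z^{\mathbf{Q}}_s|^2)$, so the bound on $\|Y\|_\infty$ and the BMO bound on $\int Z^{\mathbf{Q}}dB$ show that $N=-\int_0^{\cdot}\langle f(s,Y_s,Z^{\mathbf{Q}}_s),dB_s\rangle_d$ is itself a BMO martingale under $\mathbf{Q}$; Kazamaki's criterion then guarantees that $\mathscr{E}(N)$ is a uniformly integrable martingale, so that $d\mathbf{P}=\mathscr{E}(N)\,d\mathbf{Q}$ defines a probability measure equivalent to $\mathbf{Q}$. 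For the invariance of the density representation (the first step), I would argue through quadratic covariation: putting $W=x+B-[B,N]=x+B+\int_0^{\cdot}f(s,Y_s,Z^{\mathbf{Q}}_s)\,ds$, Girsanov's theorem and L\'evy's characterization make $W$ a $\mathbf{P}$-Brownian motion, and since $W-B$ has finite variation we have the pathwise identity $[Y,W]=[Y,B]$, which is unchanged by the equivalent change of measure. As the density representation is precisely the covariation density, $d[Y,W]_t=Z^{\mathbf{P}}_t\,dt$ under $\mathbf{P}$ while $d[Y,B]_t=Z^{\mathbf{Q}}_t\,dt$ under $\mathbf{Q}$, whence $Z^{\mathbf{P}}=Z^{\mathbf{Q}}$.

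Finally I would assemble the weak solution. Rewriting the backward equation of (\ref{MainFBSDE1}) in terms of $W$ turns it into $dY_t=-h(t,Y_t,Z^{\mathbf{Q}}_t)\,dt-Z^{\mathbf{Q}}_tf(t,Y_t,Z^{\mathbf{Q}}_t)\,dt+Z^{\mathbf{Q}}_t\,dW_t$ with $Y_T=\phi(W_T)$, which is exactly the integral equation (\ref{MainqBSDE2}) once $Z^{\mathbf{Q}}$ is identified with $Z^{\mathbf{P}}$ by the previous step. It then remains to verify the four requirements of Definition \ref{Def}: completeness and the usual conditions for $(\Omega,\mathcal{F},\mathbf{P})$ and $\{\mathcal{F}_t\}$ are inherited from the $\mathbf{Q}$-setup because $\mathbf{P}\sim\mathbf{Q}$ share the same null sets; $Y$ is a special semimartingale with density representation $Z^{\mathbf{P}}$ by construction; $W$ is a $\mathbf{P}$-Brownian motion with $\mathbf{P}(W_0=x)=1$; and the compatibility condition (3) holds automatically, as noted in the remarks following Definition \ref{Def}, since $W$ is a $\{\mathcal{F}_t\}$-Brownian motion. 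This produces the desired weak solution and completes the proof.
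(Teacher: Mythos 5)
Your proposal is correct and follows the paper's own three-step architecture --- solve FBSDE (\ref{MainFBSDE1}) via Theorem 1 and Lemma \ref{LemmaEQU}, show $\mathscr{E}(N)$ is a uniformly integrable martingale via a $BMO$ estimate, then change measure and identify $Z^{\mathbf{Q}}=Z^{\mathbf{P}}$ --- but it diverges in two ingredients, one cosmetic and one substantive. The cosmetic one: for the invariance $Z^{\mathbf{P}}=Z^{\mathbf{Q}}$ you argue via the pathwise identity $[Y,W]=[Y,B]$ and the invariance of quadratic covariation under equivalent measures, whereas the paper's Lemma \ref{Lemma1} rewrites the martingale representation under $\mathbf{P}$ and identifies the martingale and finite-variation parts of the canonical decomposition; these are the same observation in different clothing. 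The substantive one is the uniform bound on $Y$, which is the input that makes the $BMO$ estimate of Lemma \ref{Lemma2} close. The paper gets it for free from Delarue's regularity result (Lemma \ref{Delarue}): $Y_t=\Phi(t,X_t)$ with $|\Phi|\leq C_4$, a bound that is needed anyway (through $|\nabla_x\Phi|\leq C_4$) to construct the global solution in Theorem 1. You instead derive it from a coupled $L^\infty$--$BMO$ bootstrap closed on short intervals and propagated by partitioning; this is legitimate and more self-contained (it avoids the Markovian representation of $Y$ and would survive where Delarue's estimate is unavailable), but it is also where the hidden work sits: the a priori estimates presuppose $\|Y\|_\infty<\infty$, so they must indeed be run along an iteration rather than applied directly to the solution. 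The cleanest way to discharge the circularity you flag is to note that $h$ and $f$ do not depend on $X$, so on each subinterval the backward component is a standard Lipschitz BSDE with given bounded terminal data $Y_{t_i}$; its Picard iterates carry uniform $L^\infty$ and $BMO$ bounds (for subinterval length determined only by $C_1$, not by the Lipschitz constant of the terminal data), these pass to the limit by uniqueness, and the bound degrades only by a fixed factor on each of the finitely many subintervals. Finally, since Theorem 1 --- which you invoke for existence --- is itself proved in the paper using Delarue's lemma, your bootstrap does not actually remove that dependency; it only replaces the paper's one-line appeal to boundedness by a self-contained estimate. The remaining steps (Kazamaki's criterion for $BMO\Rightarrow$ uniform integrability, L\'evy's characterization of $W$, and the verification of Definition \ref{Def} including the automatic compatibility condition (3)) match the paper exactly.
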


The paper is organized as follows: In section 2 we verify the above
three steps and prove Theorem \ref{maintheorem}, while in Section 3
the uniqueness and the connection between weak solutions and strong
solutions are discussed. Finally we apply our method to an optimal
portfolio problem in incomplete markets in Section 4.

\section{Weak solutions and existence}
\subsection{Invariant property of density representation}

The following lemma is almost trivial but crucial to our results,
which states that the density representation of a special
semimartingale is invariant under the equivalent change of
probability measure. This observation is firstly made in Liang et al
\cite{Qian1}, and we recall it here for completeness.

\begin{lemma}\label{Lemma1} Let $B$ be a Brownian motion on $(\Omega, \mathcal{F},
\mathbf{Q})$ with the filtration $\{\mathcal{F}_t\}$ satisfying the
usual conditions. Let $Z^{\mathbf{Q}}$ be the density representation
of a special semimartingale $Y$ under $\mathbf{Q}$. If define an
equivalent probability measure $\mathbf{P}$ by
$\frac{d\mathbf{P}}{d\mathbf{Q}}=\mathscr{E}(N)$ for some uniform
integrable martingale $\mathscr{E}(N)$, then
$Z^{\mathbf{P}}_t=Z^{\mathbf{Q}}_t$ for $a.e.$ $t\in[0,T]$, $a.s.$
\end{lemma}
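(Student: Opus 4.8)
The plan is to reduce the identity $Z^{\mathbf{P}} = Z^{\mathbf{Q}}$ to the invariance of the quadratic covariation $[Y,B]$ under the equivalent change of measure, together with the fact that a finite-variation process contributes nothing to the covariation with the continuous process $B$. Recall that the density representation is defined through the covariation with $B$: writing the canonical decomposition of the special semimartingale under $\mathbf{Q}$ as $Y=Y_0+M^{\mathbf{Q}}+A^{\mathbf{Q}}$, with $M^{\mathbf{Q}}$ a $\mathbf{Q}$-local martingale and $A^{\mathbf{Q}}$ predictable of finite variation, the process $Z^{\mathbf{Q}}$ is characterized by $[M^{\mathbf{Q},i},B^j]_t=\int_0^t Z^{\mathbf{Q},ij}_s\,ds$ for all $i,j$, since $[B^i,B^j]_t=\delta_{ij}t$.

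First I would record that $Y$ is again a special semimartingale under $\mathbf{P}$: by Girsanov's theorem the semimartingale property is preserved under the equivalent change of measure $d\mathbf{P}/d\mathbf{Q}=\mathscr{E}(N)$, and in the present setting $Y$ is continuous, hence automatically special, so it admits a canonical decomposition $Y=Y_0+M^{\mathbf{P}}+A^{\mathbf{P}}$ with $M^{\mathbf{P}}$ a $\mathbf{P}$-local martingale and $A^{\mathbf{P}}$ of finite variation. The density representation $Z^{\mathbf{P}}$ is then defined by $[M^{\mathbf{P},i},B^j]_t=\int_0^t Z^{\mathbf{P},ij}_s\,ds$.

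The heart of the argument is the observation that the covariation $[Y,B]$ depends neither on the probability measure within its equivalence class nor on the chosen decomposition. Since $B$ is continuous, for any decomposition $Y=Y_0+M+A$ with $A$ of finite variation we have $[M,B]=[Y,B]$, the finite-variation part dropping out. Moreover $[Y,B]$ may be realized as a limit in probability of sums $\sum (Y_{t_{i+1}}-Y_{t_i})(B_{t_{i+1}}-B_{t_i})$ over refining partitions; because $\mathbf{P}$ and $\mathbf{Q}$ are equivalent they share the same null sets, so convergence in $\mathbf{Q}$-probability and in $\mathbf{P}$-probability coincide and the limiting process is $\mathbf{P}$-a.s. equal to the $\mathbf{Q}$-a.s. one. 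Combining these, $[M^{\mathbf{P}},B]=[Y,B]=[M^{\mathbf{Q}},B]$ as processes, $\mathbf{Q}$-a.s. (equivalently $\mathbf{P}$-a.s.).

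It then remains to read off the densities: for every $t$ and every $i,j$,
\[
\int_0^t Z^{\mathbf{P},ij}_s\,ds=[M^{\mathbf{P},i},B^j]_t=[M^{\mathbf{Q},i},B^j]_t=\int_0^t Z^{\mathbf{Q},ij}_s\,ds,
\]
so the two absolutely continuous processes agree for all $t$, and differentiating in $t$ yields $Z^{\mathbf{P}}_t=Z^{\mathbf{Q}}_t$ for a.e. $t\in[0,T]$, a.s. I expect no serious analytic obstacle here — the statement is, as the authors note, essentially a bookkeeping consequence of two standard facts. The only point deserving care is the measure-invariance of the covariation together with the verification that the finite-variation (drift) part, which genuinely changes under $\mathbf{P}$ through the Girsanov drift $-\int f\,ds$, does not affect $[Y,B]$; this is exactly where the continuity of $B$ and the finite variation of $A^{\mathbf{P}}$ enter. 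The same reasoning applies verbatim if one measures the density representation against the $\mathbf{P}$-Brownian motion $W=x+B-[B,N]$ instead of $B$, since $[B,N]$ is of finite variation and hence $[Y,W]=[Y,B]$.
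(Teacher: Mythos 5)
Your proof is correct, but it reaches the conclusion by a genuinely different mechanism than the paper's own argument. The paper works entirely inside the stochastic-integral representations: starting from $M_t-M_0=\int_0^t Z^{\mathbf{Q}}_s\,dB_s$ under $\mathbf{Q}$, it rewrites this identity under $\mathbf{P}$ using $\bar{B}=B-[B,N]$, obtaining $\bar{M}_t-\bar{M}_0+[M,N]_t=\int_0^t Z^{\mathbf{Q}}_s\,d\bar{B}_s+\int_0^t Z^{\mathbf{Q}}_s\,d[B,N]_s$, then invokes uniqueness of the canonical decomposition of the special semimartingale $Y$ under $\mathbf{P}$ to identify the martingale parts, $\bar{M}_t-\bar{M}_0=\int_0^t Z^{\mathbf{Q}}_s\,d\bar{B}_s$, and finally compares this with the defining representation $\bar{M}_t-\bar{M}_0=\int_0^t Z^{\mathbf{P}}_s\,d\bar{B}_s$, concluding from $\int_0^T|Z^{\mathbf{P}}_s-Z^{\mathbf{Q}}_s|^2\,ds=0$. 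You never transport a stochastic integral from one measure to the other: you characterize the density through the bracket identity $[M^{i},B^{j}]_t=\int_0^t Z^{ij}_s\,ds$ and rest the whole proof on two pathwise facts --- that $[Y,B]$ is the same process under equivalent measures (same null sets, same limits in probability of the discrete sums), and that finite-variation parts contribute nothing to brackets against continuous processes. What your route buys is conceptual transparency: it makes visible that the invariance is really a statement about quadratic covariations, hence about null sets, and your closing remark that $[Y,W]=[Y,B]$ cleanly disposes of the one point where care is genuinely needed, namely that the $\mathbf{P}$-density is by definition the integrand against the new Brownian motion $W=x+B-[B,N]$ rather than against $B$. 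What the paper's route buys is the explicit $\mathbf{P}$-decomposition $Y=\bar{M}-\bar{V}$ with $\bar{M}=M-[M,N]$, which is exactly the object reused in the subsequent weak-solution construction. Both proofs share the same implicit hypothesis, so neither is at fault on this score: the $\mathbf{P}$-local martingale part of $Y$ is assumed to admit a predictable-integrand representation against the new Brownian motion, i.e.\ $Z^{\mathbf{P}}$ exists; this is part of what ``density representation under $\mathbf{P}$'' means in the statement. Your aside ``continuous, hence automatically special'' nominally strengthens the lemma's hypotheses, but nothing in your argument actually requires continuity of $Y$ itself --- only of $B$ and of the martingale parts --- so this is cosmetic rather than a gap.
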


\begin{proof} Under the probability measure $\mathbf{Q}$,
$Y$ has the canonical decomposition $Y=M-V$ with $M$ being a local
martingale and $V$ being a finite variation process, and moreover,
$M$ admits the martingale representation:
\begin{equation}\label{Rep1}
M_t-M_0=\int_0^tZ^{\mathbf{Q}}_sdB_s,\ \ \ \text{for}\ t\in[0,T],\
a.s.
\end{equation}
for some predictably measurable process $Z^{\mathbf{Q}}$.

By Girsanov's theorem, $Y$ is still a special semimartingale under
$\mathbf{P}$ but with the canonical decomposition
$Y=\bar{M}-\bar{V}$, where $\bar{M}=M-[M,N]$ is a martingale, and
$\bar{V}=\bar{M}-Y$ is a finite variation process. We also have
$\bar{B}=B-[B,N]$ as a Brownian motion under $\mathbf{P}$. Hence
under $\mathbf{P}$, (\ref{Rep1}) becomes
$$\bar{M}_t-\bar{M}_0+[M,N]_t=\int_0^tZ^\mathbf{Q}_sd\bar{B}_s+\int_0^tZ^\mathbf{Q}_sd[B,N]_s,\
\ \ \text{for}\ t\in[0,T],\ a.s..$$ By identifying the martingale
parts and finite variation parts of the above equality, we must have
$$\bar{M}_t-\bar{M}_0=\int_0^tZ^\mathbf{Q}_sd\bar{B}_s,\ \ \ \text{for}\ t\in[0,T],\
a.s..$$ But on the other hand under $\mathbf{P}$ we also have
$$\bar{M}_t-\bar{M}_0=\int_0^tZ^\mathbf{P}_sd\bar{B}_s,\ \ \ \text{for}\ t\in[0,T],\
a.s.$$ for some predictably measurable process $Z^{\mathbf{P}}$, so
$$\int_0^T|Z^\mathbf{P}_s-Z^\mathbf{Q}_s|^2ds=0,\ \ \
a.s.,$$ which proves the claim.
\end{proof}

Since the density representation usually determines the hedging (or
replicating) strategy in finance, a direct consequence of Lemma
\ref{Lemma1} is that hedging strategy is independent of the choices
of the equivalent (martingale) probability measures. Due to Lemma
\ref{Lemma1}, we will not emphasize the dependency of the density
representation on the probability measure, and simply write it as
$Z$ from now on.

\subsection{Functional approach to FBSDEs}

The study of FBSDEs was initiated by Antonelli \cite{MR1233625}, and
this subject was further developed in \cite{MR1355060}
\cite{MR1262970} \cite{MR1701517} \cite{MR1675098} \cite{MR1440146}
and especially the monograph \cite{MR1704232} by Ma and Yong.
However most of them either solved the equations locally or assumed
some regularity on the coefficients (e.g. smoothness and
monotonicity). Recently Delarue \cite{Delarue} solved FBSDEs
globally with Lipschitz continuous assumptions on the coefficients
by combining the method of contraction mapping and the four-step
scheme of FBSDEs.

In this subsection we try to reformulate FBSDE (\ref{MainFBSDE1}) as
functional differential equation (\ref{MainFDE2}) and solve such
functional differential equation instead. The approach may benefit
especially numerical solutions of FBSDEs, because a usual obstacle
to numerically solve FBSDEs is one needs to solve $(Y,Z)$ backwards
and $X$ forwards at the same time. By introducing a functional
differential equation, we can solve all the components $(Y,Z,X)$ in
one direction altogether.

We first establish the equivalence between FBSDE (\ref{MainFBSDE1})
and functional differential equation (\ref{MainFDE2}). Besides the
space $\mathcal{C}([0,T];R^n)$ we further introduce the following
space:

\begin{itemize}
\item $H^2([0,T];R^{n})$: the space of predictably
measurable processes endowed with the norm:
$$||Z||_{H^2[0,T]}=\sqrt{E\int_0^T|Z_s|^2ds}.$$
\end{itemize}

\begin{lemma}\label{LemmaEQU}
FBSDE (\ref{MainFBSDE1}) admits a unique solution $(Y,Z,X)\in
\mathcal{C}([0,T];R^n)\times H^2([0,T];R^{n\times
d})\times\mathcal{C}([0,T];R^d)$ if and only if functional
differential equation (\ref{MainFDE2}) admits a unique solution
$\Psi\in\mathcal{C}([0,T];R^{n+d})$, and therefore by Theorem 1,
FBSDE (\ref{MainFBSDE1}) admits a unique solution.
\end{lemma}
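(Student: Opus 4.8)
The plan is to set up an explicit bijection between the set of solutions $(Y,Z,X)$ of FBSDE (\ref{MainFBSDE1}) and the set of solutions $\Psi=(V,X)^{*}$ of functional differential equation (\ref{MainFDE2}) with $\tau=0$, and then transfer existence and uniqueness across this correspondence. Once the two solution sets are shown to be in one-to-one correspondence, uniqueness on one side is equivalent to uniqueness on the other, and the final assertion follows by applying Theorem 1 to the functional differential equation.

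First I would treat the direction from (\ref{MainFDE2}) to (\ref{MainFBSDE1}). Given a solution $\Psi=(V,X)^{*}$, define $Y:=Y(V,X)$ and $Z:=Z(V,X)$ through (\ref{relationship}). The forward component (\ref{MainSDE1}) is literally the integrated form of $dX_t=f(t,Y_t,Z_t)dt+dB_t$ with $X_0=x$, so the forward equation holds. For the backward equation, set $M_t:=E[\phi(X_T)+V_T\,|\,\mathcal{F}_t]$, a martingale, and observe from the second line of (\ref{relationship}) that its martingale representation integrand is exactly $Z$, so $M_t=M_0+\int_0^t Z_s\,dB_s$. Since the first line of (\ref{relationship}) reads $Y_t=M_t-V_t$ while $V_t=\int_0^t h(s,Y_s,Z_s)ds$ by (\ref{MainFDE1}), differentiating yields $dY_t=Z_t\,dB_t-h(t,Y_t,Z_t)dt$, and at $t=T$ we get $Y_T=M_T-V_T=\phi(X_T)$. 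The memberships $Y\in\mathcal{C}([0,T];R^n)$, $Z\in H^2([0,T];R^{n\times d})$, $X\in\mathcal{C}([0,T];R^d)$ I would read off from $\Psi\in\mathcal{C}([0,T];R^{n+d})$, the boundedness of $\phi$, Doob's inequality for $M$, and the isometry $E\int_0^T|Z_s|^2ds=E|M_T-M_0|^2$.

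For the converse direction, given a solution $(Y,Z,X)$ of (\ref{MainFBSDE1}) I would set $V_t:=\int_0^t h(s,Y_s,Z_s)ds$ and show $\Psi:=(V,X)^{*}$ solves (\ref{MainFDE2}). The key point is that $Y+V$ is a martingale: indeed $d(Y_t+V_t)=Z_t\,dB_t$ by the backward equation, so $Y_t+V_t=E[Y_T+V_T\,|\,\mathcal{F}_t]=E[\phi(X_T)+V_T\,|\,\mathcal{F}_t]$, which is exactly the first line of (\ref{relationship}); the martingale representation of $Y+V$ together with uniqueness of the canonical decomposition then identifies $Z$ with $Z(V,X)$ in the second line. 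The forward equation of (\ref{MainFBSDE1}) is precisely (\ref{MainSDE1}). I would then verify that the two assignments $\Psi\mapsto(Y(V,X),Z(V,X),X)$ and $(Y,Z,X)\mapsto(V,X)^{*}$ are mutually inverse: composing them in either order returns the original data, since $V_t=\int_0^t h(s,Y_s,Z_s)ds$ holds in both descriptions and $(Y,Z)$ are recovered from $(V,X)$ by (\ref{relationship}). This bijection transports uniqueness in either direction.

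The step requiring the most care is the identification $Z=Z(V,X)$ together with the space membership, since it rests on the martingale representation of $E[\phi(X_T)+V_T\,|\,\mathcal{F}_t]$ against $B$ and on uniqueness of the decomposition of the special semimartingale $Y$ into local-martingale and finite-variation parts, so that the $H^2$-representation integrand is pinned down up to $dt\times d\mathbf{Q}$-null sets. With the bijection and the consequent equivalence of uniqueness established, applying Theorem 1 to (\ref{MainFDE2}) produces a unique $\Psi\in\mathcal{C}([0,T];R^{n+d})$, and hence a unique solution of FBSDE (\ref{MainFBSDE1}).
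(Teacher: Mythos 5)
Your proposal is correct and follows essentially the same route as the paper: both directions rest on the martingale representation of $E[\phi(X_T)+V_T\,|\,\mathcal{F}_t]$ against $B$ and on the uniqueness of the canonical decomposition of the special semimartingale $Y$, which pins down $V_t=\int_0^t h(s,Y_s,Z_s)\,ds$ and identifies $Z$ with $Z(V,X)$. Your presentation merely packages the correspondence more explicitly as a bijection (showing directly that $Y+V$ is a martingale, where the paper conditions the integrated backward equation and matches parts), but the substance is the same.
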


\begin{proof} Suppose $(Y,Z,X)$ is the unique solution to FBSDE
(\ref{MainFBSDE1}). Since $Y$ is a special (continuous)
semimartingale, it admits the following canonical decomposition:
$Y_t=M_t-V_t$ for $t\in[0,T]$, where $M$ is a continuous local
martingale and $V$ is a continuous finite variation process with
$V_0=0$. Furthermore by the martingale representation
$M_t=M_0+\int_0^tZ_sdB_s$ for $t\in[0,T]$, we have
\begin{equation}\label{canonical}
Y_t=M_0+\int_0^tZ_sdB_s-V_t,\ \ \ \text{for}\ t\in[0,T],
\end{equation}
from which we obtain the relationship (\ref{relationship}). The
backward equation in FBSDE (\ref{MainFBSDE1}) becomes
$$M_0+\int_0^tZ_sdB_s-V_t=\phi(X_T)+\int_t^Tf(s,Y_s,Z_s)ds-\int_t^TZ_sdB_s.$$
By taking conditional expectation with respect to $\mathcal{F}_t$ on
both sides:
\begin{align*}
&M_0+\int_0^tZ_sdB_s-V_t\\
=&\ E[\phi(X_T)|\mathcal{F}_t]+E\left[\int_t^Tf(s,Y_s,Z_s)ds|\mathcal{F}_t\right]\\
=&\
E[\phi(X_T)|\mathcal{F}_t]+E\left[\int_0^Tf(s,Y_s,Z_s)ds|\mathcal{F}_t\right]-\int_0^tf(s,Y_s,Z_s)ds.
\end{align*}
By the uniqueness of the canonical decomposition of $Y$, and by
identifying the martingale part and finite variation part of both
sides, $V$ must satisfy (\ref{MainFDE1}).

On the other hand, if $\Psi$ is the unique solution to functional
differential equation (\ref{MainFDE2}), the relationship
(\ref{relationship}) can be rewritten as
\begin{align*}
Y(V,X)_t&=E[\phi(X_T)+V_T|\mathcal{F}_t]-V_t\\
&=E[\phi(X_T)+V_T|\mathcal{F}_t]-\int_0^tf(s,Y(V,X)_s,Z(V,X)_s)ds,
\end{align*}
together with
$$\int_t^{T}Z_sdB_s=\phi(X_T)+V_T-E[\phi(X_T)+V_T|\mathcal{F}_t],$$
from which we deduce $(Y(V,X),Z(V,X),X)$ must satisfy
(\ref{MainFBSDE1}).\end{proof}\\

The rest of this subsection is devoted to the proof of Theorem 1.

\begin{lemma}\label{lemmaforlocal} If the coefficients satisfy
Condition \ref{Cond2.4}, and $\tau$ satisfies
$$\sqrt{T-\tau}\leq\frac{1}{8C_1(1+C_2)}\wedge1,$$
then functional differential equation (\ref{MainFDE2}) admits a
unique solution $\Psi\in\mathcal{C}([\tau,T];R^{n+d})$.
\end{lemma}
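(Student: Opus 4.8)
The plan is to solve the functional differential equation (\ref{MainFDE2}) on the short interval $[\tau,T]$ by the Banach fixed-point theorem, the smallness of $T-\tau$ being exactly what turns the natural solution map into a contraction. Concretely I would define a map $\Gamma:\mathcal{C}([\tau,T];R^{n+d})\to\mathcal{C}([\tau,T];R^{n+d})$ by
$$\Gamma(\Psi)_t=\chi_x+\int_\tau^tF(s,Y(\Psi)_s,Z(\Psi)_s)\,ds+\int_\tau^t\chi_1\,dB_s,$$
where, for each input $\Psi=(V,X)^*$, the processes $Y(\Psi)$ and $Z(\Psi)$ are read off from the relationship (\ref{relationship}). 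A fixed point of $\Gamma$ is precisely a solution of (\ref{MainFDE2}), so it suffices to show $\Gamma$ is a well-defined contraction.

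First I would check that $\Gamma$ maps the space into itself. Since $\phi$ is bounded and $V_T,X_T\in\mathcal{L}^2$, the process $E[\phi(X_T)+V_T|\mathcal{F}_t]$ is an $\mathcal{L}^2$-bounded martingale; its martingale representation against $B$ furnishes $Z(\Psi)$ through the second line of (\ref{relationship}) and places $Z(\Psi)\in H^2$ by It\^o's isometry, while Doob's maximal inequality gives $Y(\Psi)\in\mathcal{C}([\tau,T];R^{n})$. The Lipschitz and hence linear-growth bounds on $F=(h,f)^*$ then put $F(\cdot,Y(\Psi),Z(\Psi))$ in $H^2$, and Cauchy-Schwarz in time together with Doob's inequality for the Brownian increment show $\Gamma(\Psi)\in\mathcal{C}([\tau,T];R^{n+d})$.

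The core of the argument is the contraction estimate. For two inputs $\Psi,\bar\Psi$ set $\Xi=\phi(X_T)-\phi(\bar X_T)+(V_T-\bar V_T)$. The terms $\chi_x$ and $\int_\tau^t\chi_1\,dB_s$ cancel in the difference, so Cauchy-Schwarz in time and the Lipschitz bound on $F$ give
$$E\sup_{t\in[\tau,T]}|\Gamma(\Psi)_t-\Gamma(\bar\Psi)_t|^2\le 4C_1^2(T-\tau)\,E\int_\tau^T\big(|Y(\Psi)_s-Y(\bar\Psi)_s|^2+|Z(\Psi)_s-Z(\bar\Psi)_s|^2\big)\,ds.$$
For the $Z$-term, (\ref{relationship}) yields $\int_\tau^T(Z(\Psi)_s-Z(\bar\Psi)_s)\,dB_s=\Xi-E[\Xi|\mathcal{F}_\tau]$, so It\^o's isometry and the $\mathcal{L}^2$-projection property bound $E\int_\tau^T|Z(\Psi)_s-Z(\bar\Psi)_s|^2\,ds$ by $E|\Xi|^2\le 2(1+C_2^2)\|\Psi-\bar\Psi\|_{\mathcal{C}[\tau,T]}^2$, using that $\phi$ is $C_2$-Lipschitz and $|V_T-\bar V_T|,\,|X_T-\bar X_T|\le\sup_t|\Psi_t-\bar\Psi_t|$. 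For the $Y$-term, Doob's inequality applied to the martingale $E[\Xi|\mathcal{F}_t]$ gives $E\sup_s|Y(\Psi)_s-Y(\bar\Psi)_s|^2\le C(1+C_2)^2\|\Psi-\bar\Psi\|_{\mathcal{C}[\tau,T]}^2$, and a further factor $T-\tau$ appears in passing from the supremum to the time integral. Collecting terms (and using $T-\tau\le1$), the overall coefficient is of the form $C_1^2(T-\tau)(1+C_2)^2$ up to a universal constant, and the hypothesis $\sqrt{T-\tau}\le 1/(8C_1(1+C_2))\wedge1$ is calibrated precisely so that this coefficient is strictly less than $1$; Banach's theorem then delivers the unique fixed point.

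The main obstacle is that $Y(\Psi)$ and $Z(\Psi)$ depend on the terminal values $V_T,X_T$, not merely on the path of $\Psi$ up to time $t$, so $\Gamma$ is genuinely \emph{functional} rather than a classical Picard map. The device that tames this anticipating dependence is to measure the discrepancy entirely through the terminal increment $\Xi$: Doob's inequality controls the $Y$-part and It\^o's isometry controls the $Z$-part, both in terms of $E|\Xi|^2$ and hence of $\|\Psi-\bar\Psi\|_{\mathcal{C}[\tau,T]}^2$. I expect the subtle point to be that the $Z$-estimate does not itself gain a factor of $T-\tau$; the single decisive factor of $T-\tau$ that produces the contraction comes from the outer Cauchy-Schwarz bound $(\int_\tau^T|F|\,ds)^2\le(T-\tau)\int_\tau^T|F|^2\,ds$, which is exactly why the threshold is stated in terms of $\sqrt{T-\tau}$ rather than $T-\tau$.
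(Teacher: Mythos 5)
Your strategy is exactly the paper's: the same solution map (the paper calls it $\mathbb{L}$), Banach's fixed-point theorem on $\mathcal{C}([\tau,T];R^{n+d})$, the single smallness factor $\sqrt{T-\tau}$ extracted from the outer Cauchy--Schwarz bound on the time integral of $F$, It\^o's isometry for the $Z$-difference (with no time gain, exactly as you note), and a martingale estimate plus an extra $\sqrt{T-\tau}$ for the $Y$-difference. Your closing remark about where the decisive factor comes from matches the paper's computation precisely.

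The one genuine soft spot is the final step, where you assert that the hypothesis is ``calibrated precisely'' so that your coefficient is below $1$: with the constants you actually write down, it is not. Your bookkeeping is in squared norms, so every splitting $(a+b)^2\le 2a^2+2b^2$ and Doob's maximal inequality (factor $4$) inflate the coefficient: your outer bound $4C_1^2(T-\tau)$, your $Z$-bound $2(1+C_2^2)$, and your $Y$-bound via Doob (which works out to roughly $18(1+C_2^2)(T-\tau)$ after integrating in time) combine to a coefficient of order $80\,C_1^2(T-\tau)(1+C_2^2)$, whereas the hypothesis only guarantees $64\,C_1^2(T-\tau)(1+C_2)^2\le 1$; for small $C_2$ this yields a bound of $80/64>1$, so no contraction has been established under the stated threshold. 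The repair is routine and is exactly what the paper does: run the same estimates multiplicatively in $L^2$ norms via Minkowski's inequality rather than additively in squared norms, and for the $Y$-term replace Doob by conditional Jensen, $E\bigl[\{E[\,\cdot\,|\mathcal{F}_s]\}^2\bigr]\le E[\,\cdot^2\,]$, which costs constant $1$ instead of $4$. The paper's coefficient is then $C_1\sqrt{T-\tau}\bigl(\sqrt{T-\tau}(2+C_2)+2+2C_2\bigr)\le 4C_1\sqrt{T-\tau}(1+C_2)\le\tfrac12$. Note that your projection-property bound for the $Z$-term, $E|\Xi-E[\Xi|\mathcal{F}_\tau]|^2\le E|\Xi|^2$, giving $1+C_2$, is actually tighter than the paper's $2+2C_2$, so once the norms are handled via Minkowski your argument closes comfortably (even keeping Doob for the $Y$-part); but as written the decisive inequality is asserted rather than verified, and your stated constants do not deliver it.
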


\begin{proof} The mapping defined by (\ref{MainFDE2}) is denoted by
$\mathbb{L}$. We will first show that
$\mathbb{L}:\mathcal{C}([\tau,T];R^{n+d})\rightarrow\mathcal{C}([\tau,T];R^{n+d})$.
In fact for $\Psi\in\mathcal{C}([\tau,T];R^{n+d})$,
\begin{align*}
&||\mathbb{L}(\Psi)||_{\mathcal{C}[\tau,T]}\\
\leq&\ |x|+
\sqrt{E\left(\int_{\tau}^T|F(s,Y(\Psi)_s,Z(\Psi)_s)|ds\right)^2}+
\sqrt{E\sup_{\tau\leq t\leq T}|\int_{\tau}^t\chi_1dB_s|^2}\\
\leq&\ |x|+
\sqrt{T-\tau}\sqrt{E\left(\int_{\tau}^T|F(s,Y(\Psi)_s,Z(\Psi)_s)|^2ds\right)}
+2\sqrt{E|\int_{\tau}^T\chi_1dB_s|^2}\\
\leq&\ |x|+(C_1\sqrt{T-\tau}+2d)\sqrt{\int_{\tau}^T(s^2\vee 1)ds}\\
&+C_1\sqrt{T-\tau}\sqrt{E\int_{\tau}^T|Y(\Psi)_s|^2ds}+
C_1\sqrt{T-\tau}\sqrt{E\int_{\tau}^T|Z(\Psi)_s|^2ds}.
\end{align*}
Note that
\begin{align*}
&\sqrt{E\int_{\tau}^T|Y(\Psi)_s|^2ds}\\
\leq&\
\sqrt{E\int_{\tau}^T\{E[\phi(X_T)|\mathcal{F}_s]\}^2ds}+\sqrt{E\int_{\tau}^T\{E[V_T|\mathcal{F}_s]\}^2ds}+\sqrt{E\int_{\tau}^T|V_s|^2ds}\\
\leq&\
\sqrt{\int_{\tau}^TE|\phi(X_T)|^2ds}+\sqrt{\int_{\tau}^TE|V_T|^2ds}+\sqrt{\int_{\tau}^TE|V_s|^2ds}\\
\leq&\ \sqrt{T-\tau}(2+C_2)||\Psi||_{C[\tau,T]},
\end{align*}
and by It\^o's isometry,
\begin{align*}
\sqrt{E\int_{\tau}^T|Z(\Psi)_s|^2ds}=&\
\sqrt{E\left(\int_{\tau}^TZ(\Psi)_sdB_s\right)^2}\\
\leq&\
\sqrt{E|\phi(X_T)|^2}+\sqrt{E|V_T|^2}+\sqrt{E\left\{E[\phi(X_T)|\mathcal{F}_{\tau}]\right\}^2}\\
&+\sqrt{E\left\{E[V_T|\mathcal{F}_{\tau}]\right\}^2}\\
\leq&\ (2+2C_2)||\Psi||_{C[\tau,T]}.
\end{align*}
Therefore $||\mathbb{L}(\Psi)||_{\mathcal{C}[\tau,T]}<\infty$.
Similarly for $\Psi,\Psi^{'}\in\mathcal{C}([\tau,T];R^{n+d})$, we
have
\begin{align*}
&||\mathcal{L}(\Psi)-\mathcal{L}(\Psi^{'})||_{\mathcal{C}[\tau,T]}\\
\leq&\
C_1\sqrt{T-\tau}\left(\sqrt{T-\tau}(2+C_2)+2+2C_2\right)||\Psi-\Psi^{'}||_{\mathcal{C}[\tau,T]}\\
\leq &\ \frac12||\Psi-\Psi^{'}||_{\mathcal{C}[\tau,T]}
\end{align*}
by the condition on $\tau$. Hence $\mathbb{L}$ defined by
(\ref{MainFDE2}) is a contraction mapping on
$\mathcal{C}([\tau,T];R^{n+d})$.
\end{proof}\\

Based on Lemma \ref{lemmaforlocal}, we next extend to the global
solution on $[0,T]$. To do this we pursue the bounded solutions for
FBSDE (\ref{MainFBSDE1}). First by the Markov property, there exists
a Borel-measurable function $\Phi$ such that $Y_t=\Phi(t,X_t)$. By
checking the proof for Lemma \ref{lemmaforlocal}, the crucial step
to extend to the global solution of (\ref{MainFDE2}) is that one
needs a uniform estimate for the gradient of $\Phi$. We recall a
regularity result from Delarue \cite{MR2053051}.

\begin{lemma}\label{Delarue} (Delarue \cite{MR2053051})
Under Condition \ref{Cond2.4} on the coefficients, there exists a
Borel measurable $\Phi$ such that $Y_t=\Phi(t,X_t)$. Moreover there
exists a constant $C_4$ depending on the Lipschitz constants $C_1$
and $C_2$, the bound $M$ of the terminal data, the dimension $n$ and
$d$, and the terminal time $T$ such that
$$|\Phi(t,x)|,\ |\nabla_x\Phi(t,x)|\leq C_4,\ \  \ \text{for}\ (t,x)\in[0,T]\times R^d.$$
\end{lemma}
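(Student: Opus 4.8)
The plan is to follow Delarue's non-degenerate scheme, exploiting that the forward diffusion in (\ref{MainFBSDE1}) has the identity as its diffusion coefficient, so the associated problem is uniformly elliptic. First I would establish the existence of the decoupling field $\Phi$. Because the coefficients $f,h,\phi$ are deterministic and $X$ is a (time-inhomogeneous) Markov process under $\mathbf{Q}$, the local solvability obtained in Lemma \ref{lemmaforlocal} yields, for each $(t,x)$, a well-defined value $\Phi(t,x):=Y^{t,x}_t$ of the backward component of the FBSDE started from $X_t=x$; flow and uniqueness arguments then give $Y_s=\Phi(s,X_s)$ and, since the diffusion coefficient is the identity, the identification $Z_s=\nabla_x\Phi(s,X_s)$ in the a.e. sense. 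Borel measurability of $\Phi$ follows from the $L^2$-continuity of $(t,x)\mapsto Y^{t,x}_t$. Equivalently, through the four-step scheme $\Phi$ is a solution of the quasilinear parabolic system
$$\partial_t u+\tfrac12\Delta u+\nabla u\, f(t,u,\nabla u)+h(t,u,\nabla u)=0,\qquad u(T,\cdot)=\phi,$$
and the two bounds to be proved are precisely an a priori sup-bound and an a priori gradient bound for this system.

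For the sup-bound $|\Phi(t,x)|\le C_4$ I would argue directly on the BSDE. Writing $h(s,Y_s,Z_s)=h(s,Y_s,0)+\big(h(s,Y_s,Z_s)-h(s,Y_s,0)\big)$ and using the Lipschitz property of $h$ in $z$, the last bracket is a bounded linear functional of $Z_s$; absorbing it by a Girsanov change of measure turns the backward equation into $Y_t=\tilde E[\,\phi(X_T)+\int_t^T h(s,Y_s,0)\,ds\mid\mathcal{F}_t\,]$. Since $|\phi|\le M$ and $|h(s,0,0)|$ is bounded on the compact interval $[0,T]$, the linear growth of $h$ in $y$ together with Gronwall's inequality applied to $\mathrm{ess\,sup}_\omega|Y_t|$ give a bound $|Y_t|\le\big(M+T\sup_s|h(s,0,0)|\big)e^{C_1T}$ that is uniform in $x$; this is the claimed $C_4$ for $\Phi$. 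In the system case $n>1$ the linearisation is carried out componentwise, so that no comparison principle is needed.

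The substantial part is the gradient bound $|\nabla_x\Phi(t,x)|\le C_4$, and this is where I expect the main obstacle. The difficulty is the feedback loop: the forward drift $f(t,Y,Z)$ depends on $Z=\nabla_x\Phi$ (the diffusion being the identity), so a large spatial sensitivity of $\Phi$ feeds back into the forward dynamics and can, for strongly coupled FBSDEs, make $\nabla_x\Phi$ blow up in finite time --- this is the classical obstruction to global solvability. The remedy is the uniform ellipticity coming from the identity diffusion coefficient. Probabilistically I would differentiate the FBSDE in $x$ to obtain the tangent (variational) FBSDE for $(\nabla_xX,\nabla_xY,\nabla_xZ)$ and represent $\nabla_x\Phi(t,x)$ through a Bismut--Elworthy--Li type integration-by-parts formula, which trades the spatial derivative for an integral against the Brownian motion with a weight controlled by the non-degenerate diffusion; alternatively, via the four-step scheme, one invokes the interior gradient estimates of Ladyzhenskaya--Solonnikov--Ural'tseva for non-degenerate quasilinear parabolic systems. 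Either route delivers a gradient bound on small time intervals whose length is dictated by Lemma \ref{lemmaforlocal}.

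Finally, the decisive step is to show that this local gradient bound does not deteriorate when the small intervals are concatenated to cover $[0,T]$. Here I would arrange the a priori estimate so that the bound on $\nabla_x\Phi$ at the right endpoint of an interval, together with the already-established sup-bound on $\Phi$, controls the bound at the left endpoint through a Gronwall or fixed-point inequality whose constant depends only on $C_1,C_2,M,n,d,T$ and not on the number of steps; the non-degeneracy is exactly what keeps this recursion stable. Iterating backward from $t=T$, where $\nabla_x\Phi(T,\cdot)=\nabla\phi$ is bounded by $C_2$, then yields the uniform constant $C_4$ on all of $[0,T]$, as asserted. This non-blow-up argument, rather than any single estimate, is the crux and the part I expect to be hardest to make fully rigorous.
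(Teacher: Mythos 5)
You are trying to prove something the paper itself never proves: Lemma \ref{Delarue} is imported verbatim as a known regularity result of Delarue \cite{MR2053051}, used as a black box whose only role is to supply the constant $C_4$ that fixes the mesh of the partition in the global construction. So there is no in-paper argument to compare against; what you have written is an attempted reconstruction of Delarue's own proof, and it must be judged on those terms.

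As a reconstruction, the architecture is right --- decoupling field $\Phi(t,x)=Y^{t,x}_t$, the quasilinear PDE from the four-step scheme (your PDE is the correct one), sup bound first, gradient bound second, with non-degeneracy of the identity diffusion as the key structural fact --- but two steps are genuinely defective. First, the Girsanov linearisation you propose for the sup bound does not work ``componentwise'' when $n>1$: the component $h^i(s,Y_s,Z_s)$ depends on the whole matrix $Z_s$, while a change of measure applied to the equation for $Y^i$ can only absorb terms of the form $Z^i_s\beta_s$ involving the $i$-th row; the dependence on the other rows $Z^j$, $j\neq i$, cannot be removed by any single exponential martingale. The elementary repair is to apply It\^o's formula to $|Y_t|^2$, bound $2\langle Y_s,h(s,Y_s,Z_s)\rangle_n$ by Young's inequality so that the $|Z_s|^2$ term is absorbed into the quadratic variation, and run a backward Gronwall argument on $\text{ess sup}_{\omega}|Y_t|^2$ --- this is essentially the computation the paper itself carries out inside the proof of Lemma \ref{Lemma2}. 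Second, and decisively, the gradient bound --- the entire content of the lemma and the reason the paper must cite Delarue rather than argue directly --- is not established in your text: you name candidate tools (a Bismut--Elworthy--Li formula, or interior Ladyzhenskaya--Solonnikov--Ural'tseva estimates) and then describe what the concatenation argument \emph{would} need to deliver, namely a recursion for the Lipschitz constant of $\Phi$ across small time intervals whose output is controlled by $C_1,C_2,M,n,d,T$ alone, uniformly in the number of intervals. That non-blow-up recursion is precisely the technical core of Delarue's work, and asserting that it can be ``arranged'' is not the same as proving it; indeed you concede this yourself. As a citation-level summary your proposal is accurate, but as a proof it has a genuine gap exactly where the lemma is nontrivial.
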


Based on such constant $C_4$, we make a partition of $[0,T]$ by
$\pi:0=t_0\leq t_1\leq\cdots\leq t_N=T$ with the mesh
$|\pi|=\max_{1\leq i\leq N}|t_i-t_{i-1}|$ such that
$$\sqrt{|\pi|}=\frac{1}{8C_1(1+C_4)}\wedge 1.$$

We start with $[t_{N-1},t_N]$ and consider
$\Psi(N)=(V(N),X(N)^{t_{N-1},x})^{*}$ such that
$$\Psi(N)_t=\chi_x+\int_{t_{N-1}}^tF(s,Y(N)_s,Z(N)_s)ds+\int_{t_{N-1}}^t\chi_1dB_s$$
with
\begin{align*}
Y(N)_t=&\ E[\phi(X(N)^{t_{N-1},x}_T)+V(N)_T|\mathcal{F}_t]-V(N)_t,\\
\int_{t_{N-1}}^TZ(N)_sdB_s=&\ \phi(X(N)^{t_{N-1},x}_T)+V(N)_T\\
&-E[\phi(X(N)^{t_{N-1},x}_T)+V(N)_T|\mathcal{F}_{t_{N-1}}],
\end{align*}
where we used the superscripts $(t_{N-1},x)$ to emphasize
$X(N)^{t_{N-1},x}$ starting from $X(N)_{t_{N-1}}^{t_{N-1},x}=x$. By
Lemma \ref{lemmaforlocal}, there exists a unique solution:
$$\Psi(N)=(V(N),X(N)^{t_{N-1},x})^{*}\in\mathcal{C}([t_{N-1},T];R^{n+d})$$
and we also get $(Y(N),Z(N))$. Moreover, there exists a
Borel-measurable function $\Phi_{N-1}$ such that
$Y(N)_{t_{N-1}}=\Phi_{N-1}(t_{N-1},x)$ and by Lemma \ref{Delarue},
$$|\nabla_x\Phi_{N-1}(t_{N-1},x)|\leq C_4\ \  \ \text{for}\ x\in R^d.$$

In general on $[t_{i-1},t_i]$ for $1\leq i\leq N-1$, consider
$\Psi(i)=(V(i),X(i)^{t_{i-1},x})^{*}$ such that
$$\Psi(i)_t=\chi_x+\int_{t_{i-1}}^tF(s,Y(i)_s,Z(i)_s)ds+\int_{t_{i-1}}^t\chi_1dB_s$$
with
\begin{align*}
Y(i)_t=&\ E[\Phi_{i}(t_i,X(i)^{t_{i-1},x}_{t_i})+V(i)_{t_i}|\mathcal{F}_t]-V(i)_t,\\
\int_{t_{i-1}}^{t_i}Z(i)_sdB_s=&\ \Phi_i(t_i,X(i)^{t_{i-1},x}_{t_i})+V(i)_{t_i}\\
&-E[\Phi_i(t_i,X(i)^{t_{i-1},x}_{t_i})+V(i)_{t_i}|\mathcal{F}_{t_{i-1}}].
\end{align*}
By Lemma \ref{lemmaforlocal} again, there exists a unique solution:
$$\Psi(i)=(V(i),X(i)^{t_{i-1},x})^{*}\in\mathcal{C}([t_{i-1},t_i];R^{n+d})$$
and we get $(Y(i),Z(i))$ as well. Moreover there exists a
Borel-measurable function $\Phi_{i-1}$ such that
$Y(i)_{t_{i-1}}=\Phi_{i-1}(t_{i-1},x)$ and by Lemma \ref{Delarue},
$$|\nabla_x\Phi_{i-1}(t_{i-1},x)|\leq C_4\ \  \ \text{for}\ x\in R^d.$$

Of course $(V(i),X(i)^{t_{i-1},x})$ for $1\leq i\leq N$ are not the
real solutions to (\ref{MainFDE2}) on the corresponding time
interval $[t_{i-1},t_i]$, because they start from
$$(V(i)_{t_{i-1}},X(i)^{t_{i-1},x}_{t_{i-1}})=(0,x).$$
We need to shift the paths of $(V(i),X(i)^{t_{i-1},x})$ accordingly
in order to match the starting points for the solutions to
(\ref{MainFDE2}) on each time interval $[t_{i-1},t_i]$.

\begin{lemma} If the coefficients satisfy Condition \ref{Cond2.4},
then the global solution $\Psi=(V,X)^{*}$ to (\ref{MainFDE2}) is
constructed as follows: for $1\leq i\leq N$,
$$V_t=V(i)_t+\sum_{j=1}^{i-1}V(j)_{t_j}\ \ \ \text{for}\ t_{i-1}\leq t\leq t_i,$$
where we follow the convention $\sum_{j=1}^0=0$ and
\begin{eqnarray*}
    X_{t}=\left\{
    \begin{array}{ll}
    X(1)_t^{t_0,x}& \ \text{for}\ t_0\leq t\leq t_1;\\
    X(2)_t^{t_1,X_{t_1}}& \ \text{for}\ t_1\leq t\leq t_2;\\
    \cdots\\
    X(N)_t^{t_{N-1},X_{t_{N-1}}}& \ \text{for}\ t_{N-1}\leq t\leq
    t_N
    \end{array}\right.
\end{eqnarray*}
with $(Y,Z)$ being constructed as $(Y_t,Z_t)=(Y(i)_t,Z(i)_t)$ for
$t_{i-1}\leq t\leq t_i$.
\end{lemma}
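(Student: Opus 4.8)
The plan is to verify directly that the concatenated and shifted process $\Psi=(V,X)^{*}$ solves the global functional differential equation (\ref{MainFDE2}) with $\tau=0$, by checking separately the three ingredients of a solution: membership in $\mathcal{C}([0,T];R^{n+d})$, the forward (drift-plus-diffusion) part of (\ref{MainFDE2}), and the backward relationship (\ref{relationship}). First I would check that $\Psi$ is well defined, continuous, $\mathcal{F}_t$-adapted and square integrable. Continuity across each node $t_i$ is immediate from the normalisations $V(i)_{t_{i-1}}=0$ and $X(i)^{t_{i-1},X_{t_{i-1}}}_{t_{i-1}}=X_{t_{i-1}}$: from the left the value of $V$ at $t_i$ is $V(i)_{t_i}+\sum_{j=1}^{i-1}V(j)_{t_j}$ and from the right it is $\sum_{j=1}^{i}V(j)_{t_j}$, and these agree, with the analogous matching for $X$. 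Square integrability follows from the finitely many local bounds supplied by Lemma \ref{lemmaforlocal}.

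Next, since $F(s,Y_s,Z_s)=F(s,Y(i)_s,Z(i)_s)$ on each $[t_{i-1},t_i]$, the global drift integral $\int_0^t F\,ds$ is the concatenation of the local drift integrals, and likewise for the stochastic integral $\int_0^t\chi_1\,dB$. Summing the local equations defining $\Psi(i)$ and cancelling the telescoping constants $\sum_j V(j)_{t_j}$ then reproduces the forward part of (\ref{MainFDE2}) on all of $[0,T]$.

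The heart of the argument is the backward relationship, which I would establish by backward induction on $i$, from $i=N$ down to $i=1$, with the induction hypothesis that $Y_t=E[\phi(X_T)+V_T\mid\mathcal{F}_t]-V_t$ holds on $[t_i,T]$. Writing $c_{i-1}:=\sum_{j=1}^{i-1}V(j)_{t_j}$, which is $\mathcal{F}_{t_{i-1}}$-measurable and hence $\mathcal{F}_t$-measurable for all $t\geq t_{i-1}$, the key is the dynamic-programming consistency $\Phi_i(t_i,X_{t_i})=Y_{t_i}$: the deterministic value function $\Phi_i$ built on $[t_i,t_{i+1}]$, evaluated at the random endpoint $X_{t_i}$, reproduces the glued value $Y_{t_i}=Y(i+1)_{t_i}$. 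Granting this, I would substitute the induction hypothesis at $t_i$ into the local relationship
\begin{equation*}
Y(i)_t=E\!\left[\Phi_i(t_i,X(i)_{t_i})+V(i)_{t_i}\mid\mathcal{F}_t\right]-V(i)_t,
\end{equation*}
use $\Phi_i(t_i,X_{t_i})+V(i)_{t_i}=E[\phi(X_T)+V_T\mid\mathcal{F}_{t_i}]-c_{i-1}$, and apply the tower property together with the $\mathcal{F}_t$-measurability of $c_{i-1}$ to obtain $Y_t=E[\phi(X_T)+V_T\mid\mathcal{F}_t]-V_t$ on $[t_{i-1},t_i]$. The base case $i=N$ is the same computation with the true terminal $\phi$ in place of $\Phi_N$ and is checked directly. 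Once the global $Y$ and $V$ are identified, $M_t:=Y_t+V_t=E[\phi(X_T)+V_T\mid\mathcal{F}_t]$ is a martingale whose representation integrand is, by uniqueness, the concatenation $Z_t=Z(i)_t$ on each $[t_{i-1},t_i]$ (the shift $c_{i-1}$ contributes nothing to the martingale part); this is precisely the $Z(\Psi)$ of (\ref{relationship}), so the drift $F(s,Y(\Psi)_s,Z(\Psi)_s)$ coincides with the one used above and $\Psi$ solves (\ref{MainFDE2}).

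I expect the main obstacle to be the dynamic-programming consistency $\Phi_i(t_i,X_{t_i})=Y_{t_i}$. This is not a formal manipulation: it requires the flow/Markov property of the forward equation on $[t_i,t_{i+1}]$ started from the $\mathcal{F}_{t_i}$-measurable point $X_{t_i}$, together with the independence of the Brownian increments after $t_i$ from $\mathcal{F}_{t_i}$, to justify evaluating the deterministic function $\Phi_i$ at the random initial value without disturbing the conditional-expectation structure. Once this Markovian substitution is made rigorous, using the regularity of $\Phi_i$ from Lemma \ref{Delarue}, the remaining steps reduce to routine uses of the tower property and the uniqueness of both the canonical decomposition and the martingale representation.
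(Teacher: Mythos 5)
Your proposal is correct and follows essentially the same route as the paper: verify directly that the glued process satisfies (\ref{MainFDE2}), treating the last interval $[t_{N-1},t_N]$ as the base case (where the $\mathcal{F}_{t_{N-1}}$-measurable shift $\sum_{j=1}^{N-1}V(j)_{t_j}$ passes through the conditional expectations and cancels) and handling the earlier intervals by backward induction. You in fact supply more detail than the paper, which dismisses the induction step as ``easy to verify'': your identification of the dynamic-programming consistency $\Phi_i(t_i,X_{t_i})=Y_{t_i}$, justified via the Markov/flow property and the regularity of $\Phi_i$ from Lemma \ref{Delarue}, is precisely the content hidden in that step, as is your identification of $Z_t=Z(i)_t$ through the uniqueness of the martingale representation of $M_t=Y_t+V_t$.
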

\begin{proof} We only need to show $\Psi=(V,X)^{*}$ with $(Y,Z)$
satisfying (\ref{MainFDE2}) for $\tau=0$. In fact for
$t\in[t_{N-1},t_N]$, by the definition of $(V,X)$,
\begin{align*}
V_t-V_{t_{N-1}}&=V(N)_t+\sum_{j=1}^{N-1}V(j)_{t_j}-V(N)_{t_{N-1}}-\sum_{j=1}^{N-1}V(j)_{t_j}\\
&=\int_{t_{N-1}}^th(s,Y(N)_s,Z(N)_s)ds,
\end{align*}
and
\begin{align*}
X_t-X_{t_{N-1}}&=X(N)_t^{t_{N-1},X_{t_{N-1}}}-X_{t_{N-1}}\\
&=\int_{t_{N-1}}^tf(s,Y(N)_s,Z(N)_s)ds+\int_{t_{N-1}}^{t}dB_s,
\end{align*}
so
$$\Psi_t-\Psi_{t_{N-1}}=\int_{t_{N-1}}^tF(s,Y(N)_s,Z(N_s))ds+\int_{t_{N-1}}^t\chi_1dB_s,$$
where
\begin{align*}
Y(N)_t=&\
E\left[\phi\left(X(N)^{t_{N-1},X_{t_{N-1}}}_T\right)+V(N)_T|\mathcal{F}_t\right]-V(N)_t\\
=&\
E\left[\phi(X_T)+V(N)_T+\sum_{j=1}^{N-1}V(j)_{t_{j}}|\mathcal{F}_t\right]\\
&\ -V(N)_t-\sum_{j=1}^{N-1}V(j)_{t_{j}}\\
=&\ E[\phi(X_T)+V_T|\mathcal{F}_t]-V_t,
\end{align*}
and
\begin{align*}
\int_{t_{N-1}}^TZ(N)_sdB_s=&\ \phi(X(N)^{t_{N-1},X_{t_{N-1}}}_T)+V(N)_T\\
&-E\left[\phi\left(X(N)^{t_{N-1},X_{t_{N-1}}}_T\right)+V(N)_T|\mathcal{F}_{t_{N-1}}\right]\\
=&\ \phi(X_T)+V(N)_T+\sum_{j=1}^{N-1}V(j)_{t_{j}}\\
&\
-E\left[\phi(X_T)+V(N)_T+\sum_{j=1}^{N-1}V(j)_{t_{j}}|\mathcal{F}_{t_{N-1}}\right]\\
=&\ \phi(X_T)+V_T-E[\phi(X_T)+V_T|\mathcal{F}_{t_{N-1}}].
\end{align*}
Hence $(V,X)$ with $(Y,Z)$ defined in the lemma satisfy
(\ref{MainFDE2}) on $[t_{N-1},t_N]$.

In general for $1\leq i\leq N-1$, by the backward induction, it is
easy to verify $\Psi_t=(V_t,X_t)^*$ with $(Y_t,Z_t)$ also satisfy
(\ref{MainFDE2}) for $t\in[t_{i-1},t_i]$.
\end{proof}

\subsection{Uniform integrability of stochastic exponential}
In this subsection we will verify the Dol\'eans-Dade exponential
$\mathscr{E}(N)$ is a uniform-integrable martingale. To prove this
we need an appropriate martingale space. It turns out the martingale
space we need is \emph{bounded-mean-oscillation} ($BMO$)-martingale
space. As the $BMO$-martingale theory is already quite standard in
the quadratic BSDE study (e.g. \cite{MR2441926} and
\cite{MR2152241}), we only recall some basic facts that are
necessary in the following. For the further details and proofs, we
refer to He et al \cite{HWY}.

Let $M$ be a continuous local martingale on $[0,T]$. For $p\geq 1$,
define the martingale space $\mathcal{H}^p$ equipped with the norm
$||M||_{\mathcal{H}^p}=E\{[M,M]_T^{p/2}\}^{1/p}$. For $p>1$,
$\mathcal{H}^p$ is the dual space of $\mathcal{H}^q$ with $q$ being
the conjugate of $p$, i.e. $1/p+1/q=1$. However for $p=1$, the dual
space of $\mathcal{H}^1$ is strictly larger than
$\mathcal{H}^{\infty}$, the class of all martingales with bounded
quadratic variation. By the Fefferman's inequality, the dual of
$\mathcal{H}^1$ is in fact $BMO_2$, which is the subspace of
$\mathcal{H}^2$ and such that there exists a constant $C_5$,
$$E\left\{|M_T-M_{\tau}|^2|\mathcal{F}_{\tau}\right\}\leq C_5^2$$
for any stopping time $\tau\leq T$, and $C_5$ is defined to be the
$BMO_2$-norm. Similarly we can also define $BMO_p$-space for any
$p\geq 1$, which are equivalent to each other. In fact for $p\geq 1$
and $M\in\mathcal{H}^2$, there exists a constant $C_6(p)$ depending
on $p$ such that
$$||M||_{BMO_1}\leq ||M||_{BMO_p}\leq C_6(p)||M||_{BMO_1}$$
by the Jensen's inequality and the John-Nirenberg inequality
respectively. So from now on we will simply write $BMO$ without
specifying $p$.

\begin{lemma}\label{Lemma2}
If the coefficients satisfy Condition \ref{Cond2.4}, then
$$N=-\int_0^{\cdot}\langle f(s,Y_s,Z_s), dB_s\rangle_d$$
is a $BMO$-martingale under $\mathbf{Q}$, and therefore the
Dol\'eans-Dade exponential $\mathscr{E}(N)$ is a uniform-integrable
martingale under $\mathbf{Q}$.
\end{lemma}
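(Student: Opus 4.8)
The plan is to verify the defining $BMO$ energy estimate for $N$ directly and then to quote the standard exponential theory. Since $N=-\int_0^{\cdot}\langle f(s,Y_s,Z_s),dB_s\rangle_d$ has quadratic variation $[N,N]_t=\int_0^t|f(s,Y_s,Z_s)|^2\,ds$, proving $N\in BMO$ under $\mathbf{Q}$ amounts to producing a constant $C_5$, independent of the stopping time $\tau\leq T$, with
$$E\Big\{\int_{\tau}^T|f(s,Y_s,Z_s)|^2\,ds\,\Big|\,\mathcal{F}_{\tau}\Big\}\leq C_5^2.$$
First I would exploit the structure of the coefficients: Lipschitz continuity together with the continuity of $s\mapsto f(s,0,0)$ on the compact interval $[0,T]$ gives a linear growth bound $|f(s,y,z)|\leq K_f+C_1(|y|+|z|)$ for a finite constant $K_f$, while Lemma \ref{Delarue} supplies the uniform bound $|Y_s|=|\Phi(s,X_s)|\leq C_4$. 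Inserting these and expanding the square, the conditional integral of $|f|^2$ is dominated by a deterministic constant plus a multiple of $E\{\int_{\tau}^T|Z_s|^2\,ds\,|\,\mathcal{F}_{\tau}\}$. Thus everything reduces to a uniform conditional $L^2$-bound on $Z$, i.e. to showing that the martingale part $\int_0^{\cdot}Z_s\,dB_s$ of $Y$ is itself a $BMO$-martingale.

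For that key estimate I would apply It\^o's formula to $|Y_t|^2$ along the backward equation $dY_t=-h(t,Y_t,Z_t)\,dt+Z_t\,dB_t$, obtaining
$$|Y_T|^2-|Y_{\tau}|^2=-2\int_{\tau}^T\langle Y_s,h(s,Y_s,Z_s)\rangle\,ds+2\int_{\tau}^T\langle Y_s,Z_s\,dB_s\rangle+\int_{\tau}^T|Z_s|^2\,ds.$$
Rearranging for $\int_{\tau}^T|Z_s|^2\,ds$ and taking $E\{\cdot\,|\,\mathcal{F}_{\tau}\}$, the stochastic integral drops out since it is a genuine martingale ($Y$ is bounded and $Z\in H^2$). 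The terminal and initial terms are controlled by $|Y|\leq C_4$; for the drift I would bound $|h(s,Y_s,Z_s)|\leq K_h+C_1(C_4+|Z_s|)$ (again using continuity of $h(\cdot,0,0)$), so that the only dangerous contribution $2C_1\int_{\tau}^T|Y_s|\,|Z_s|\,ds$ is linear in $|Z_s|$ and can be absorbed by Young's inequality into $\tfrac12\int_{\tau}^T|Z_s|^2\,ds$ plus a constant. Absorbing this half on the left-hand side yields $E\{\int_{\tau}^T|Z_s|^2\,ds\,|\,\mathcal{F}_{\tau}\}\leq C_Z^2$ with $C_Z$ independent of $\tau$, which is exactly the $BMO$ bound needed in the previous step.

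Combining the two estimates produces the required $C_5$, so $N\in BMO$ under $\mathbf{Q}$. Finally I would invoke the standard $BMO$-martingale theory recalled from He et al \cite{HWY}: the Dol\'eans--Dade exponential of a $BMO$-martingale satisfies the reverse H\"older inequality and is therefore a uniformly integrable martingale, which is the second assertion of the lemma. I expect the main obstacle to be the a priori $Z$-estimate of the second paragraph, and in particular the fact that closing it depends essentially on the \emph{uniform} boundedness of $Y$ furnished by Lemma \ref{Delarue}; without that boundedness the cross term could not be absorbed and the argument would not produce a $\tau$-independent constant.
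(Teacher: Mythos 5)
Your proposal is correct and follows essentially the same route as the paper: reduce the $BMO$ property of $N$ to a uniform conditional bound on $E\{\int_{\tau}^T|Z_s|^2\,ds\,|\,\mathcal{F}_{\tau}\}$ via the linear growth of $f$ and the uniform boundedness of $Y$, then obtain that bound by applying It\^o's formula to $|Y_t|^2$, taking conditional expectations, and absorbing the cross term $2\langle Y,h\rangle$ by Young's inequality, exactly as in the paper's proof. Your version is in fact slightly more careful on two minor points the paper glosses over: you justify dropping the stochastic integral (as a true martingale, since $Y$ is bounded and $Z\in H^2$) and you keep an explicit constant $K_f$ for $|f(s,0,0)|$ rather than the paper's form $C_1(t+|y|+|z|)$.
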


\begin{proof}
For any stopping time $\tau\leq T$, by It\^o's isometry and the
linear growth condition $|f(t,y,z)|\leq C_1(t+|y|+|z|)$, we obtain
\begin{align}\label{Equ1}
&\ \sup_{\tau}E\left[|N_T-N_{\tau}|^2|\mathcal{F}_{\tau}\right]\nonumber\\
=&\ \sup_{\tau}E\left[\int_{\tau}^T|f(t,Y_t,Z_t)|^2dt\left|\right.\mathcal{F}_{\tau}\right]\nonumber\\
\leq&\
C_1^2T^3+3C_1^2\sup_{\tau}E\left[\int_{\tau}^T|Y_t|^2dt\left|\right.\mathcal{F}_{\tau}\right]+3C_1^2\sup_{\tau}E\left[\int_{\tau}^T|Z_t|^2dt\left|\right.\mathcal{F}_{\tau}\right],\
\ \ a.s..
\end{align}
Since $Y$ is uniformly bounded, we only need to control the last
term of (\ref{Equ1}). By applying It\^o's formula to $(Y_t)^2$ from
$\tau$ to $T$ and taking conditional expectation on
$\mathcal{F}_{\tau}$, we obtain
\begin{align*}
&\ |Y_{\tau}|^2+E\left[\int_{\tau}^T|Z_t|^2dt\left|\right.\mathcal{F}_{\tau}\right]\\
=&\ E[\phi(X_T)^2|\mathcal{F}_{\tau}]+2E\left[\int_{\tau}^T\langle
Y_t,h(t,Y_t,Z_t)\rangle_{n}dt\left|\right.\mathcal{F}_{\tau}\right]\\
\leq&\
M^2+\lambda^2E\left[\int_{\tau}^T|Y_t|^2dt\left|\right.\mathcal{F}_{\tau}\right]+
\frac{1}{\lambda^2}E\left[\int_{\tau}^T|h(t,Y_t,Z_t)|^2dt\left|\right.\mathcal{F}_{\tau}\right]\\
\leq&\
M^2+\lambda^2E\left[\int_{\tau}^T|Y_t|^2dt\left|\right.\mathcal{F}_{\tau}\right]+
\frac{C_1^2T^3}{\lambda^2}+\frac{3C_1^2}{\lambda^2}E\left[\int_{\tau}^T|Y_t|^2dt\left|\right.\mathcal{F}_{\tau}\right]\\
&\
+\frac{3C_1^2}{\lambda^2}E\left[\int_{\tau}^T|Z_t|^2dt\left|\right.\mathcal{F}_{\tau}\right],\
\  \ a.s.,
\end{align*}
where we used the elementary inequality
$2ab\leq\lambda^2+b^2/\lambda^2$. By choosing $\lambda$ large enough
such that $1-3C_1^2/\lambda^2>0$, and by the uniform boundedness of
$Y$, there exists a constant $C_7$ such that
$$\sup_{\tau}E\left[\int_{\tau}^T|Z_t|^2dt\left|\right.\mathcal{F}_{\tau}\right]\leq C_7,\ \ \ a.s.,$$
and the conclusion follows by plugging the above estimate into
(\ref{Equ1}).
\end{proof}

\section{Uniqueness and strong solutions for $n=1$}
As in the classical SDE theory, there are also several notions of
uniqueness for BSDEs as well. In this section we discuss the
uniqueness of BSDE (\ref{MainqBSDE1}). We assume the following
condition on the coefficients:

\begin{condition}\label{Cond2.5}
Condition \ref{Cond2.4} is assumed to be satisfied. Moreover $n=1$,
i.e. BSDE (\ref{MainqBSDE1}) is a scalar BSDE; $F=F(t,z)$ with
$F=(h,f)^{*}$, i.e.  both of the coefficients $h$ and $f$ only
depend on $t$ and $z$; and $f^j=f^j(t,z^j)$ for $j=1,\cdots,d$, i.e.
there is no mixture terms of $z$ in $f$.
\end{condition}

The weak solution can be regarded as probability distribution on the
sample path space, so we will specify the sample path space of
(\ref{MainqBSDE1}) firstly. By $\mathbf{W}^m$ we denote the space of
continuous functions $C([0,T];R^m)$. Define the coordinate mapping
$X_t:\mathbf{W}^m\rightarrow R^m$ by
$$X_t(x)=x_t,\ \ \ \text{for}\ x\in \mathbf{W}^m,$$
and on $\mathbf{W}^m$, define the following $\sigma$-algebras:
$$\mathcal{B}^X_t=\sigma(x_s:s\leq t);\ \ \ \mathcal{B}^X_{\hat{t}}=\sigma(x_u-x_t:t\leq u\leq T);$$
and $\mathcal{B}^X=\vee_{t\in[0,T]}\mathcal{B}^X_t$. Obviously we
have the relationship
$\mathcal{B}^X=\mathcal{B}^X_t\vee\mathcal{B}^X_{\hat{t}}$ for any
$t\in[0,T]$. By Definition \ref{Def}, $Y$ and $W$ must be
continuous. However it is not obvious at all that the density
representation $Z$ has any path regularity. Fortunately under
Condition \ref{Cond2.5}, Imkeller and Dos Reis \cite{Imkeller}
already did this work for us, which states that there is a
continuous modification of $Z$. We will choose such continuous
version of $Z$ from now on.

If $(\Omega,\mathcal{F},\mathbf{P})$, $\{\mathcal{F}_t\}$, and
$(Y,Z,W)$ is one weak solution of (\ref{MainqBSDE1}), we can
consider the image measure of $\mathbf{P}$ under the mapping
$(Y,Z,W):\Omega\rightarrow\mathbf{W}^{1+d}\times\mathbf{W}^d$
defined by
$$\bar{\omega}\mapsto\left((Y(\bar{\omega}),Z(\bar{\omega})),W(\bar{\omega})\right),\ \ \ \text{for}\ \bar{\omega}\in\Omega,$$
which is denoted by $\mathbf{Q}$. Since the projection of
$\mathbf{Q}$ on the third component $W$ is a Wiener measure on
$(\mathbf{W}^d,\mathcal{B}^W)$, denoted by $\mathbf{Q}^*$ from now
on, and all the spaces are Polish under the uniform topology, there
exists a unique regular conditional probability
$\mathbf{Q}\{\cdot|\omega\}$ such that:

(1) for $\omega\in\mathcal{B}^W$, $\mathbf{Q}\{\cdot|\omega\}$ is a
probability measure on
$(\mathbf{W}^{1+d},\mathcal{B}^{Y}\otimes\mathcal{B}^{Z})$;

(2) for $A\in\mathcal{B}^{Y}\otimes\mathcal{B}^{Z}$, the map
$\omega\mapsto\mathbf{Q}\{A|\omega\}$ is
$\mathcal{B}^{W}$-measurable;

(3) for  $A\in\mathcal{B}^{Y}\otimes\mathcal{B}^{Z}$ and
$B\in\mathcal{B}^W$, we have
$$\mathbf{Q}(A\times B)=\int_{B}\mathbf{Q}\{A|\omega\}\mathbf{Q}^*(d\omega).$$

\begin{definition}
The weak solution to (\ref{MainqBSDE1}) is called unique in law if
for any two weak solutions $(\Omega,\mathcal{F},\mathbf{P})$,
$\{\mathcal{F}_t\}$, $(Y,Z,W)$ and
$(\bar{\Omega},\bar{\mathcal{F}},\bar{\mathbf{P}})$,
$\{\bar{\mathcal{F}}_t\}$, $(\bar{Y},\bar{Z},\bar{W})$, the
probability distributions of $(Y,Z)$ and $(\bar{Y},\bar{Z})$ are
equal. i.e.
$\mathbf{P}_{(Y,Z)}=\bar{\mathbf{P}}_{(\bar{Y},\bar{Z})}$.

The weak solution to (\ref{MainqBSDE1}) is called pathwise unique if
for any two weak solutions $(Y,Z)$ and $(\bar{Y},\bar{Z})$ defined
on the same probability space $(\Omega,\mathcal{F},\mathbf{P})$ with
the filtration $\{\mathcal{F}_t\}$ and the same Brownian motion $W$,
$(Y,Z)$ is a continuous modification of $(\bar{Y},\bar{Z})$.
\end{definition}

For a SDE, the celebrated Yamada-Watanabe theorem states that the
weak existence and pathwise uniqueness of the solutions to a SDE
implies the existence of a strong solution. As Kurtz
\cite{MR2336594} pointed out: strong solution is a consequence of
measurable selection, and such result has little to do with the
equation, but really a consequence of the convexity of collections
of the probability distributions of solutions. If the compatibility
constraint (See Definition \ref{Def}) is satisfied, we further have
the adapteness of solutions.

\begin{theorem}\label{maintheorem2} If the coefficients satisfy Condition \ref{Cond2.5},
then the weak solution to BSDE (\ref{MainqBSDE1}) is pathwise
unique, and the strong solution also exists.
\end{theorem}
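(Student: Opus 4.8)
The plan is to split the statement into its two assertions and prove them in order. First I would establish pathwise uniqueness directly, and then I would deduce strong existence from the weak existence already granted by Theorem \ref{maintheorem} together with pathwise uniqueness, via a Yamada--Watanabe type argument. This is exactly the programme indicated in the paragraph preceding the theorem, and it exploits the path-space machinery (the spaces $\mathbf{W}^m$, the Wiener measure $\mathbf{Q}^*$, and the regular conditional probability $\mathbf{Q}\{\cdot|\omega\}$) set up just above.

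For pathwise uniqueness I would take two weak solutions $(Y,Z,W)$ and $(\bar Y,\bar Z,W)$ carried by the same filtered space with the same Brownian motion $W$, and study the difference $\delta Y=Y-\bar Y$, $\delta Z=Z-\bar Z$. Under Condition \ref{Cond2.5} the drift no longer depends on $y$, so subtracting the two copies of (\ref{MainqBSDE2}) gives a linear backward equation
$$d\delta Y_t=-\big[h(t,Z_t)-h(t,\bar Z_t)\big]dt-\big[Z_tf(t,Z_t)-\bar Z_tf(t,\bar Z_t)\big]dt+\delta Z_t\,dW_t,\qquad \delta Y_T=0.$$
The key is to linearise both bracketed terms as $\theta_t\,\delta Z_t$ for a single row process $\theta$. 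The Lipschitz term contributes a factor bounded by $C_1$. For the quadratic term I would use the no-mixture hypothesis $f^j=f^j(t,z^j)$ to treat each coordinate separately, writing
$$Z^jf^j(t,Z^j)-\bar Z^jf^j(t,\bar Z^j)=Z^j\big[f^j(t,Z^j)-f^j(t,\bar Z^j)\big]+\big(Z^j-\bar Z^j\big)f^j(t,\bar Z^j),$$
so that the linearisation coefficient is dominated by $C_1(|Z_t|+|\bar Z_t|+t)$. Since both $Z$ and $\bar Z$ are $BMO$ (the a priori estimate in the proof of Lemma \ref{Lemma2} applies to any weak solution with bounded terminal data), $\theta$ generates a $BMO$-martingale whose Dol\'eans--Dade exponential is uniformly integrable. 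Changing measure by this exponential and applying Girsanov turns $\tilde W_t=W_t-\int_0^t\theta_s^{*}\,ds$ into a Brownian motion under which $d\delta Y_t=\delta Z_t\,d\tilde W_t$; because $\delta Y$ is bounded ($Y$ and $\bar Y$ are bounded, by the standard bounded-solution estimate for quadratic BSDEs, cf. Lemma \ref{Delarue}) it is a genuine martingale, and $\delta Y_T=0$ forces $\delta Y\equiv 0$ and then $\delta Z=0$ a.e. by It\^o's isometry. This is pathwise uniqueness.

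For strong existence I would run the Yamada--Watanabe construction on the path spaces. Starting from one weak solution with law $\mathbf{Q}$ on $\mathbf{W}^{1+d}\times\mathbf{W}^d$ and its disintegration $\mathbf{Q}(A\times B)=\int_B\mathbf{Q}\{A|\omega\}\,\mathbf{Q}^*(d\omega)$ over $\mathbf{Q}^*$, I would form the measure $\mathbf{Q}\{d\theta_1|\omega\}\,\mathbf{Q}\{d\theta_2|\omega\}\,\mathbf{Q}^*(d\omega)$ on $\mathbf{W}^{1+d}\times\mathbf{W}^{1+d}\times\mathbf{W}^d$, carrying two copies of $(Y,Z)$ that are conditionally independent given the common path $W$. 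After checking that each copy is again a weak solution in the sense of Definition \ref{Def}, pathwise uniqueness forces the two copies to coincide almost surely; two conditionally i.i.d.\ objects that agree a.s.\ must be conditionally deterministic, so $\mathbf{Q}\{\cdot|\omega\}=\delta_{\Theta(\omega)}$ for a $\mathcal{B}^W$-measurable map $\Theta:\mathbf{W}^d\to\mathbf{W}^{1+d}$. Thus $(Y,Z)=\Theta(W)$ is a measurable functional of the Brownian path, the compatibility condition (3) upgrades this to adaptedness to the Brownian filtration, and feeding $\Theta$ an arbitrary Brownian motion yields a strong solution; uniqueness in law falls out of the same disintegration as a by-product.

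The hard part will be the verification step in the Yamada--Watanabe construction: one must show that the two conditionally independent copies really are weak solutions on the product space, which for a BSDE (unlike an SDE) means re-establishing that $W$ is still a Brownian motion with the compatibility property (3) relative to the enlarged filtration, that the stochastic integral $\int Z\,dW$ and the density-representation character of $Z$ survive, and that (\ref{MainqBSDE2}) is preserved. The continuous modification of $Z$ supplied by Imkeller and Dos Reis under Condition \ref{Cond2.5} is what legitimises working on the path space $\mathbf{W}^{1+d}$ and is essential here. A secondary technical point is securing the boundedness of $\delta Y$ used to promote the local martingale to a true one after the Girsanov change of measure, which rests on the bounded-solution a priori estimate for quadratic BSDEs with bounded terminal data.
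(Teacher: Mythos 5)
Your proposal is correct and follows essentially the same route as the paper: pathwise uniqueness via linearising the quadratic term coordinatewise (using $f^j=f^j(t,z^j)$) and removing it by a Girsanov change of measure whose Dol\'eans--Dade exponential is justified by a $BMO$ estimate (the paper absorbs only the $Zf$ term and handles the $h$-difference with Young's inequality applied to $e^{\alpha t}(Y_t-\bar Y_t)^2$, whereas you fold $h$ into the same measure change and argue with a bounded martingale --- an immaterial variation), followed by the Yamada--Watanabe construction with regular conditional probabilities on path space, where compatibility condition (3) keeps $\omega$ a Brownian motion and makes the resulting Dirac map $\Phi$ adapted, exactly as in the paper. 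The one caveat --- which the paper shares, since its appeal to the method of Lemma \ref{Lemma2} also presupposes bounded $Y$ --- is that the boundedness of $Y,\bar Y$ and the $BMO$ property of $Z,\bar Z$ for \emph{arbitrary} weak solutions is invoked rather than proved, and your citation of Lemma \ref{Delarue} is not quite apt there (it concerns the Markovian FBSDE solution, not a general weak solution), so this a priori estimate would need to be established as a separate preliminary step.
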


We first prove the pathwise uniqueness. By employing the Girsanov's
theorem reversely, we have the following pathwise uniqueness result.

\begin{lemma} If the coefficients satisfy Condition \ref{Cond2.5},
then the weak solution to (\ref{MainqBSDE1}) is pathwise unique.
\end{lemma}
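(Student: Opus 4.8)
The plan is to prove pathwise uniqueness by running the nonlinear Girsanov transformation in reverse: given a weak solution of the quadratic BSDE, undo the drift change to recover a solution of the associated FBSDE, where uniqueness is already available. Let me think through the steps carefully.

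We have two weak solutions $(Y,Z)$ and $(\bar Y,\bar Z)$ on the same space $(\Omega,\mathcal F,\mathbf P)$ with the same Brownian motion $W$. Under Condition 2.5, the coefficients $h=h(t,z)$ and $f=f(t,z)$ depend only on $t$ and $z$, and $f^j=f^j(t,z^j)$ has no mixing of the $z$ components. Each weak solution satisfies
$$dY_t = -h(t,Z_t)\,dt - Z_t f(t,Z_t)\,dt + Z_t\,dW_t,\quad Y_T=\phi(W_T).$$

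The idea is to define a new measure via Girsanov. For the solution $(Y,Z)$, set $N_t = \int_0^t \langle f(s,Z_s), dW_s\rangle_d$ and define $\mathbf Q$ by $d\mathbf Q/d\mathbf P = \mathscr E(N)_T$. By Lemma 4 (the BMO estimate), $\mathscr E(N)$ is a uniformly integrable martingale, so $\mathbf Q$ is a genuine probability measure equivalent to $\mathbf P$. Under $\mathbf Q$, Girsanov tells us $B_t := W_t - x + \int_0^t f(s,Z_s)\,ds$ is a Brownian motion. Rewriting the backward equation, the $-Z_t f\,dt$ term is absorbed and we recover
$$dY_t = -h(t,Z_t)\,dt + Z_t\,dB_t,$$
while $W_t = x + B_t - \int_0^t f(s,Z_s)\,ds$ gives the forward dynamics $dW_t = f(t,Z_t)\,dt + dB_t$. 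Thus $(W,Y,Z)$ under $\mathbf Q$ is a solution to FBSDE (\ref{MainFBSDE1}) with the forward process equal to $W$. The crucial point is that, by Lemma \ref{Lemma1}, the density representation $Z$ is invariant under the change from $\mathbf P$ to $\mathbf Q$, so the $Z$ appearing here is genuinely the density representation under $\mathbf Q$ as well.

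Doing the same construction for $(\bar Y,\bar Z)$ produces a measure $\bar{\mathbf Q}$ and a $\bar{\mathbf Q}$-Brownian motion $\bar B$ such that $(W,\bar Y,\bar Z)$ solves the same FBSDE under $\bar{\mathbf Q}$. Now the strategy is to invoke the equivalence of FBSDE (\ref{MainFBSDE1}) and the functional differential equation (\ref{MainFDE2}) together with its unique solvability (Lemma \ref{LemmaEQU} and Theorem 1). Since the functional equation has a unique solution and its solution is a deterministic functional of the driving Brownian path, both $(Y,Z)$ and $(\bar Y,\bar Z)$ must coincide with that unique functional applied to the path of $W$ (respectively $B$ and $\bar B$). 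The subtlety is that $B$ and $\bar B$ are \emph{different} Brownian motions built from the same $W$ via different drifts, so I would want to express everything directly in terms of the common path $W$. Writing $Y_t=\Phi(t,W_t)$ and $\bar Y_t=\Phi(t,W_t)$ using the Markovian representation (Lemma \ref{Delarue}) with the \emph{same} function $\Phi$ forces $Y=\bar Y$; then $Z=\nabla_x\Phi(t,W_t)=\bar Z$ follows from the representation of the martingale part, giving pathwise uniqueness up to continuous modification.

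The main obstacle I expect is the last identification: showing that both weak solutions are driven by the \emph{same} measurable functional of the common Brownian motion $W$, despite the measure changes being solution-dependent. One must check that the functional $\Phi$ produced by solving (\ref{MainFDE2}) does not secretly depend on which solution we started from — i.e., that the FBSDE data ($h$, $f$, $\phi$, and the common starting point $x$) alone determine $\Phi$, which it does by the uniqueness in Theorem 1, but care is needed because the auxiliary FBSDE is posed under $\mathbf Q$ (resp. $\bar{\mathbf Q}$) rather than under $\mathbf P$. Establishing that the decompositions are genuinely measure-independent, and that the continuous modification of $Z$ guaranteed by Imkeller–Dos Reis lets us pass from almost-everywhere equality to pathwise equality, is where the real work lies. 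Condition \ref{Cond2.5}, in particular the decoupled form $f^j=f^j(t,z^j)$ and the dependence only on $(t,z)$, is presumably what makes the reverse Girsanov clean enough to carry out this identification.
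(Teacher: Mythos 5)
Your overall strategy -- undo the nonlinear Girsanov transformation and fall back on FBSDE uniqueness -- is natural, but it is not the paper's argument, and as written it has a genuine gap at precisely the step you yourself flag as ``where the real work lies.'' After your two measure changes, $(W,Y,Z)$ solves FBSDE (\ref{MainFBSDE1}) on $(\Omega,\mathcal{F},\{\mathcal{F}_t\},\mathbf{Q})$ driven by $B$, while $(W,\bar{Y},\bar{Z})$ solves it on $(\Omega,\mathcal{F},\{\mathcal{F}_t\},\bar{\mathbf{Q}})$ driven by $\bar{B}$. These are two \emph{different} stochastic bases, and both the measures and the driving Brownian motions are built from the unknown solutions themselves. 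Theorem 1 and Lemma \ref{LemmaEQU} give uniqueness on a \emph{fixed} filtered space with a \emph{fixed} Brownian motion; applying them separately under $\mathbf{Q}$ and under $\bar{\mathbf{Q}}$ relates nothing to nothing. To bridge the two bases you assert $Y_t=\Phi(t,W_t)$ and $\bar{Y}_t=\Phi(t,W_t)$ with the \emph{same} $\Phi$, but Lemma \ref{Delarue} only provides the existence of a decoupling function for the FBSDE solution; the statement that every solution, on every compatible filtration (which may lack the martingale representation property), under every equivalent measure, is decoupled by one and the same $\Phi$ is exactly a uniqueness-in-law/universality property of the FBSDE, which is proved neither in the paper nor by you. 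So your reduction replaces the claim to be proved by another unproved uniqueness claim. There are also smaller defects: the sign in $B_t=W_t-x+\int_0^tf(s,Z_s)\,ds$ is wrong (with $N=\int_0^{\cdot}\langle f(s,Z_s),dW_s\rangle_d$, Girsanov gives $B=W-x-\int_0^{\cdot}f(s,Z_s)\,ds$, which is what makes $dW_t=f\,dt+dB_t$ come out correctly), and the uniform integrability of $\mathscr{E}(N)$ cannot simply be quoted from Lemma \ref{Lemma2}, since under $\mathbf{P}$ the dynamics of $Y$ contain the quadratic term and Definition \ref{Def} does not assume $Y$ bounded.

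The paper's proof avoids the cross-measure comparison entirely by making a \emph{single} Girsanov transformation built from both solutions at once. Using the decoupled structure of Condition \ref{Cond2.5} (this is where $f^j=f^j(t,z^j)$ is really needed), one sets $\beta^j_s=\bigl(Z^j_sf^j(s,Z^j_s)-\bar{Z}^j_sf^j(s,\bar{Z}^j_s)\bigr)/\bigl(Z^j_s-\bar{Z}^j_s\bigr)$ when $Z^j_s\neq\bar{Z}^j_s$ and $\beta_s^j=0$ otherwise, so that $|\beta^j_s|^2\le C(1+|Z^j_s|^2+|\bar{Z}^j_s|^2)$, and defines $d\mathbf{Q}/d\mathbf{P}=\mathscr{E}\bigl(\sum_j\int_0^{\cdot}\beta^j_s\,dW^j_s\bigr)$, checked to be legitimate by a BMO argument. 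Under this one new measure the difference of the two quadratic equations becomes a \emph{linear} (Lipschitz) relation in $(Y-\bar{Y},Z-\bar{Z})$, and the elementary weighted estimate on $e^{\alpha t}(Y_t-\bar{Y}_t)^2$ with $\alpha=2C_1^2$ yields $Y=\bar{Y}$ and $Z=\bar{Z}$ directly, with no appeal to FBSDE theory or decoupling fields. If you wish to salvage your route, the missing ingredient you must supply is that the decoupling field of FBSDE (\ref{MainFBSDE1}) is determined by the coefficients alone, uniformly over stochastic bases and equivalent measures; the paper's difference-quotient transformation is precisely the device that makes such a detour unnecessary.
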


\begin{proof} Suppose $(Y,Z)$ and $(\bar{Y},\bar{Z})$ are two weak solutions
on $(\Omega,\mathcal{F},\mathbf{P})$ with $\{\mathcal{F}_t\}$ and
Brownian motion $W$. By applying It\^o's formula to $e^{\alpha
t}(Y_t-\bar{Y}_t)^{2}$ for some $\alpha$ to be determined, we obtain
\begin{align}\label{chap4:Equ2222}
&e^{\alpha t}(Y_t-\bar{Y}_t)^2\nonumber\\
=&-2\int_t^Te^{\alpha
s}(Y_s-\bar{Y}_s)d(Y_s-\bar{Y}_s)-\int_t^Te^{\alpha
s}d[Y-\bar{Y},Y-\bar{Y}]_s\nonumber\\
&-\int_t^T\alpha e^{\alpha
s}(Y_s-\bar{Y}_s)^2ds\nonumber\\
=&\
2\int_t^Te^{\alpha s}(Y_s-\bar{Y}_s)\left\{\sum_{j=1}^d(Z^j_sf^j(s,Z^j_s)-\bar{Z}^j_sf^j(s,\bar{Z}^j_s))+(h(s,Z_s)-h(s,\bar{Z}_s))\right\}ds\nonumber\\
&-2\int_t^Te^{\alpha
s}(Y_s-\bar{Y}_s)\sum_{j=1}^d(Z^j_s-\bar{Z}^j_s)dW_s^j-\int_t^Te^{\alpha
s}\sum_{j=1}^d|Z_s^j-\bar{Z}_s^j|^2ds\nonumber\\
&-\int_t^T\alpha e^{\alpha s}(Y_s-\bar{Y}_s)^2ds.
\end{align}

Note that for $s\in[0,T]$, and $z^j,\bar{z}^j\in R$ for
$j=1,\cdots,d$,
\begin{align*}
&\left|z^jf^j(s,z^j)-\bar{z}^jf^j(s,\bar{z}^j)\right|\\
\leq&\ \left|z^jf^j(s,z^j)-\bar{z}^jf^j(s,z^j)\right|
+\left|\bar{z}^jf^j(s,z^j)-\bar{z}^jf^j(s,\bar{z}^j)\right|\\
\leq&\
C_1(T+|z^j|)|z^j-\bar{z}^j|+C_1|\bar{z}^j||z^j-\bar{z}^j|\\
\leq&\ C_{1}(T+|z^j|+|\bar{z}^j|)|z^j-\bar{z}^j|.
\end{align*}
Now if we set
$$\beta_s^j=\frac{Z_s^jf^j(s,Z_s^j)-\bar{Z}_s^jf^j(s,\bar{Z}_s^j)}{Z_s^j-\bar{Z}_s^j},\ \ \ \text{for}\ s\in[0,T],$$
when $Z_s^j-\bar{Z}_s^j\neq 0$, and $\beta_s^j=0$ for $s\in[0,T]$
otherwise, then there exists a constant $C_{8}$ such that
$|\beta_s^j|^2\leq C_{8}(1+|Z_{s}^j|^2+|\bar{Z}_{s}^j|^2)$. Based on
such $\beta^j$, we define a new probability measure $\mathbf{Q}$ by
$\frac{d\mathbf{Q}}{d\mathbf{P}}=\mathscr{E}(N)$ where
$$N=\sum_{j=1}^d\int_0^{\cdot}\beta_s^jdW_s^j,$$
and under $\mathbf{Q}$ define a new Brownian motion $B$ by
$B=W-[W,N]$. Then under the probability measure $\mathbf{Q}$,
(\ref{chap4:Equ2222}) reduces to
\begin{align*}
&e^{\alpha t}(Y_t-\bar{Y}_t)^2\\
=&-2\int_t^Te^{\alpha
s}(Y_s-\bar{Y}_s)\sum_{j=1}^d(Z^j_s-\bar{Z}^j_s)dB_s^j+2\int_t^Te^{\alpha
s}(Y_s-\bar{Y}_s)(h(s,Z_s)-h(s,\bar{Z}_s))ds\\
&-\int_t^Te^{\alpha
s}\sum_{j=1}^d|Z_s^j-\bar{Z}_s^j|^2ds-\int_t^T\alpha e^{\alpha
s}(Y_s-\bar{Y}_s)^2ds.
\end{align*}
By taking expectation under $\mathbf{Q}$ we have
\begin{align*}
E^{\mathbf{Q}}[e^{\alpha t}(Y_t-\bar{Y}_t)^2]=&\
E^{\mathbf{Q}}\left\{\int_t^Te^{\alpha
s}2(Y_s-\bar{Y}_s)(h(s,Z_s)-h(s,\bar{Z}_s))ds\right\}\\
&-E^{\mathbf{Q}}\left\{\int_t^Te^{\alpha
s}|Z_s-\bar{Z}_s|^2ds\right\} -E^{\mathbf{Q}}\left\{\int_t^T\alpha
e^{\alpha s}(Y_s-\bar{Y}_s)^2ds\right\}.
\end{align*}
By the elementary inequality $2ab\leq \lambda^2a^2+b^2/\lambda^2$,
$$
2(Y_s-\bar{Y}_s)(h(s,Z_s)-h(s,\bar{Z}_s))\leq
\lambda^2(Y_s-\bar{Y}_s)^2+\frac{C_1^2}{\lambda^2}|Z_s-\bar{Z}_s|^2.
$$
By choosing $\lambda^2=\alpha$ and $\alpha=2C_1^2$, we obtain
$$E^{\mathbf{Q}}[e^{2C_1^2 t}(Y_t-\bar{Y}_t)^2]
\leq-\frac12E^{\mathbf{Q}}\left\{\int_t^Te^{2C_1^2s}|Z_s-\bar{Z}_s|^2ds\right\}\leq
0,$$ so $Y_t=\bar{Y}_t$ for $t\in[0,T],$ $a.s.$, and $Z_t=\bar{Z}_t$
for $a.e.\ t\in[0,T]$, $a.s..$ Now the only step left is to verify
$\mathscr{E}(N)$ is a uniformly-integrable martingale, and we need
to verify $N$ is a $BMO$-martingale under $\mathbf{P}$. In fact for
any stopping time $\tau\leq T$,
\begin{align*}
E\left\{|N_T-N_{\tau}|^2|\mathcal{F}_{\tau}\right\}&
=\sum_{j=1}^dE\left\{\int_{\tau}^T|\beta_s^j|^2ds|\mathcal{F}_{\tau}\right\}\\
&\leq C_{8}
E\left\{\int_{\tau}^T(d+|Z_s|^2+|\bar{Z}_s|^2)ds|\mathcal{F}_{\tau}\right\},\
\ \ a.s..
\end{align*}
The way to control the integral term involving $Z$ and $\bar{Z}$ has
already been presented in the proof of Lemma \ref{Lemma2}. Therefore
$\mathbf{Q}$ defined above is indeed a probability measure.
\end{proof}

The idea for the following lemma is standard: to transfer the
structure of a weak solution such that $\mathbf{W}^{1+d}$ becomes
the sample path space for $(Y,Z)$ and $\mathbf{W}^d$ that for $W$.
What allows us to carry it through is the regular conditional
probability introduced above.

\begin{lemma} For BSDE (\ref{MainqBSDE1}) with the coefficients satisfying Condition \ref{Cond2.5}, pathwise uniqueness implies uniqueness in
law, and furthermore if the weak solution exists, then the strong
solution also exists.
\end{lemma}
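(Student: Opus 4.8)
The plan is to follow the Yamada--Watanabe scheme, adapted to the backward setting, using the regular conditional probability $\mathbf{Q}\{\cdot|\omega\}$ introduced above as the gluing device. Given two weak solutions $(\Omega,\mathcal{F},\mathbf{P})$, $\{\mathcal{F}_t\}$, $(Y,Z,W)$ and $(\bar\Omega,\bar{\mathcal{F}},\bar{\mathbf{P}})$, $\{\bar{\mathcal{F}}_t\}$, $(\bar Y,\bar Z,\bar W)$, I would push each forward onto the canonical space $\mathbf{W}^{1+d}\times\mathbf{W}^d$ to obtain image measures $\mathbf{Q}$ and $\bar{\mathbf{Q}}$. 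Since $W$ and $\bar W$ are Brownian motions started at $x$, both share the same third marginal $\mathbf{Q}^*$, the Wiener measure on $(\mathbf{W}^d,\mathcal{B}^W)$. Disintegrating each image measure over $\mathbf{Q}^*$ yields the regular conditional probabilities $\mathbf{Q}\{\cdot|\omega\}$ and $\bar{\mathbf{Q}}\{\cdot|\omega\}$ on $\mathbf{W}^{1+d}$, and I then form the glued measure
$$\widehat{\mathbf{Q}}(d\eta\,d\bar\eta\,d\omega)=\mathbf{Q}\{d\eta|\omega\}\,\bar{\mathbf{Q}}\{d\bar\eta|\omega\}\,\mathbf{Q}^*(d\omega)$$
on $\mathbf{W}^{1+d}\times\mathbf{W}^{1+d}\times\mathbf{W}^d$, under which the two solution copies are conditionally independent given the common driving path $\omega$.

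On this product space the coordinate processes provide two triples $(Y,Z,W)$ and $(\bar Y,\bar Z,W)$ driven by the \emph{same} $W$, and I would verify that each is again a weak solution in the sense of Definition \ref{Def}. The measurability and integrability properties transfer immediately from the image-measure construction; the essential point is the compatibility constraint, Condition (3) of Definition \ref{Def}. I must check that under $\widehat{\mathbf{Q}}$ the increments $\{W_u-W_t\}$ remain independent of the enlarged filtration generated by both $(Y,Z)$ and $(\bar Y,\bar Z)$ up to time $t$. This holds because each original solution satisfies compatibility and the two copies are conditionally independent given $W$, so $W$ is still a Brownian motion relative to the joint filtration and the integral equation (\ref{MainqBSDE2}) continues to hold for each copy, the stochastic integrals transferring correctly thanks to the continuous modification of $Z$ available under Condition \ref{Cond2.5}.

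Since $(Y,Z)$ and $(\bar Y,\bar Z)$ are now weak solutions on a common probability space sharing the same Brownian motion $W$, the pathwise uniqueness established in the preceding lemma forces $(Y,Z)=(\bar Y,\bar Z)$ $\widehat{\mathbf{Q}}$-a.s. Reading off the marginals gives $\mathbf{Q}=\bar{\mathbf{Q}}$, so the laws of $(Y,Z)$ under the two original solutions coincide, which is precisely uniqueness in law. For the strong solution I would specialise the gluing to $\bar{\mathbf{Q}}=\mathbf{Q}$: pathwise uniqueness then makes the two conditionally independent copies agree a.s., forcing the conditional law $\mathbf{Q}\{\cdot|\omega\}$ to be a Dirac mass $\delta_{G(\omega)}$ for $\mathbf{Q}^*$-almost every $\omega$ and defining a Borel map $G:\mathbf{W}^d\to\mathbf{W}^{1+d}$ with $(Y,Z)=G(W)$. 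Again invoking Condition (3), $G$ is adapted, so $(Y,Z)$ is $\mathcal{F}_t^W$-adapted and hence a strong solution.

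I expect the main obstacle to be the second paragraph: rigorously confirming that the glued measure $\widehat{\mathbf{Q}}$ yields genuine weak solutions, and in particular that the compatibility constraint survives the gluing and that the backward stochastic integral identity (\ref{MainqBSDE2}) is preserved under the transfer of measure. The difficulty is that the stochastic integral against $W$ is not a pathwise functional of the coordinate paths and must be reconstructed on the canonical space; here the continuous version of $Z$ furnished under Condition \ref{Cond2.5} and the independence of the increments of $W$ are exactly what make the reconstruction legitimate.
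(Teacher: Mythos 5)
Your proposal is correct and follows essentially the same route as the paper: the glued measure $\widehat{\mathbf{Q}}$ you construct is exactly the paper's measure $\pi$ on $\mathbf{W}^{1+d}\times\bar{\mathbf{W}}^{1+d}\times\mathbf{W}^d$, the compatibility constraint is used in the same way (via $\mathbf{Q}\{A_t|\omega\}=\mathbf{Q}\{A_t|\omega_t\}$) to keep $\omega$ a Brownian motion under the joint filtration, and the Dirac-mass argument producing an adapted map $G$ (the paper's $\Phi$) for the strong solution is the same Yamada--Watanabe conclusion. The technical obstacle you flag at the end is real but is passed over by the paper as well, which declares the proof ``quite standard'' and presents only the basic steps.
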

\begin{proof}Since the proof is quite standard, we only present the basic steps.

Let $(\Omega,\mathcal{F},\mathbf{P})$, $\{\mathcal{F}_t\}$,
$(Y,Z,W)$ and $(\bar{\Omega},\bar{\mathcal{F}},\bar{\mathbf{P}})$,
$\{\bar{\mathcal{F}}_t\}$, $(\bar{Y},\bar{Z},\bar{W})$ be two weak
solutions. Let $\mathbf{W}^{1+d}$ and $\bar{\mathbf{W}}^{1+d}$ be
two copies of $C([0,T];R)\times C([0,T];R^d)$. By using the regular
conditional probability $\mathbf{Q}\{\cdot|\omega\}$ and
$\bar{\mathbf{Q}}\{\cdot|\omega\}$, we define a probability measure
$\pi$ on the probability space $(\Theta,\mathcal{B}(\Theta))$ by
$$\pi\left((dy,dz),(d\bar{y},d\bar{z}),d\omega\right)=\mathbf{Q}\{(dy,dz)|\omega\}
\bar{\mathbf{Q}}\{(d\bar{y},d\bar{z})|\omega\}\mathbf{Q}^*(d\omega)$$
where
$$(\Theta,\mathcal{B}(\Theta))=
(\mathbf{W}^{1+d}\times\bar{\mathbf{W}}^{1+d}\times\mathbf{W}^d,
\mathcal{B}^Y\otimes\mathcal{B}^Z\otimes\mathcal{B}^{\bar{Y}}\otimes\mathcal{B}^{\bar{Z}}
\otimes\mathcal{B}^W).$$ On $(\Theta,\mathcal{B}(\Theta),\pi)$, we
further define the filtration $\{\mathcal{G}_t\}$ which is generated
by $\sigma((y_s,z_s),(\bar{y}_s,\bar{z}_s),\omega_s:s\leq t)$
augmented by the $\pi$-null sets in $\mathcal{B}(\Theta)$. Then
under $\pi$ and $\{\mathcal{G}_t\}$, $\omega$ is still a Brownian
motion. In fact by the compatibility constraint in Definition 3,
$\mathcal{B}^{Y}_t\otimes\mathcal{B}_t^{Z}$ is independent of
$\mathcal{B}^W_{\hat{t}}$. Hence for
$A_t\in\mathcal{B}_t^Y\otimes\mathcal{B}_t^Z$,
$$\mathbf{Q}\{A_t|\omega\}=\mathbf{Q}\{A_t|\omega_t\}.$$
Likewise we also have
$\bar{\mathbf{Q}}\{\bar{A}_t|\omega\}=\bar{\mathbf{Q}}\{\bar{A}_t|\omega_t\}$
for
$\bar{A}_t\in\mathcal{B}_t^{\bar{Y}}\otimes\mathcal{B}_t^{\bar{Z}}$.
Based on above relationships, for $B_t\in\mathcal{B}_t^{W}$,
$u\in[t,T]$ and $\xi\in R^d$,
\begin{align*}
&E^{\pi}\left[e^{i\langle\xi,\omega_u-\omega_t\rangle_d}\emph{1}_{A_t}\emph{1}_{\bar{A}_t}\emph{1}_{B_t}\right]\\
=&\int_{B_t}e^{i\langle\xi,\omega_u-\omega_t\rangle_d}
\mathbf{Q}\{A_t|\omega\}\bar{\mathbf{Q}}\{\bar{A}_t|\omega\}\mathbf{Q}^*(d\omega)\\
=&\int_{\mathbf{W}^d}e^{i\langle\xi,\omega_u-\omega_t\rangle_d}\mathbf{Q}^{*}(d\omega)\int_{B_t}
\mathbf{Q}\{A_t|\omega_t\}\bar{\mathbf{Q}}\{\bar{A}_t|\omega_t\}\mathbf{Q}^*(d\omega)\\
=&E^{\pi}\left[e^{i\langle\xi,\omega_u-\omega_t\rangle_d}\right]E^{\pi}\left[\emph{1}_{A_t}\emph{1}_{\bar{A}_t}\emph{1}_{B_t}\right],
\end{align*}
which means $\{\omega_u-\omega_t:t\leq u\leq T\}$ is independent of
the filtration $\{\mathcal{G}_t\}$.

Therefore $(y,z,\omega)$ and $(\bar{y},\bar{z},\omega)$ are two weak
solutions on the same filtered probability space
$(\Theta,\mathcal{B}(\Theta),\{\mathcal{G}_t\},\pi)$. Pathwise
uniqueness means
\begin{equation}\label{Yamada1}
\pi\left(\{(y,z),(\bar{y},\bar{z}),\omega\}\in\Theta:
(y,z)=(\bar{y},\bar{z})\right)=1,
\end{equation} so for any
$A\in\mathcal{B}^Y\otimes\mathcal{B}^Z$, the probability
distribution $\mathbf{P}_{(Y,Z)}(\bar{\omega}\in\Omega: (Y,Z)\in A)$
equals
$$
\pi\left(\{(y,z),(\bar{y},\bar{z}),\omega\}\in\Theta: (y,z)\in
A\right) =\pi\left(\{(y,z),(\bar{y},\bar{z}),\omega\}\in\Theta:
(\bar{y},\bar{z})\in A\right)
$$
which is equal to the probability distribution
$\bar{\mathbf{P}}_{(\bar{Y},\bar{Z})}(\bar{\omega}\in\bar{\Omega}:
(\bar{Y},\bar{Z})\in A)$.

To prove the second claim, we firstly show
$\mathbf{Q}\{\cdot|\omega\}$ and $\bar{\mathbf{Q}}\{\cdot|\omega\}$
assign full measure to the same singleton. By (\ref{Yamada1}) and
the definition of $\pi$, we have
$$\int_{\mathbf{W}^{d}}\int_{(y,z)=(\bar{y},\bar{z})}\mathbf{Q}\{(dy,dz)|\omega\}
\bar{\mathbf{Q}}\{(\bar{y},\bar{z})|\omega\}\mathbf{Q}^*(d\omega)=1,$$
so there exists $N\in\mathcal{B}^W$ with $\mathbf{Q}^*(N)=0$ such
that
$$\int_{(y,z)=(\bar{y},\bar{z})}\mathbf{Q}\{(dy,dz)|\omega\}
\bar{\mathbf{Q}}\{(\bar{y},\bar{z})|\omega\}=1,\ \ \ \text{for}\
\omega\in N^c.$$ This can only occur if there exists a
$\mathcal{B}^W/\mathcal{B}^Y\otimes\mathcal{B}^Z$-measurable map
$\Phi=(\Phi^Y,\Phi^Z): \mathbf{W}^{d}\rightarrow\mathbf{W}^{1+d}$
such that
$$\mathbf{Q}\{(y,z)|\omega\}=\bar{\mathbf{Q}}\{(y,z)|\omega\}=
\delta_{\Phi(\omega)}(y,z),\ \ \ \text{for}\ \omega\in N^c.$$ It
then follows $(y,z)=(\Phi^Y(\omega),\Phi^Z(\omega))$ for $\omega\in
N^c$. But recalling $\omega\mapsto\mathbf{Q}\{A_t|\omega\}$ is
$\mathcal{B}^{W}_t$-measurable for
$A_t\in\mathcal{B}^Y_t\otimes\mathcal{B}^Z_t$, then by the standard
Dynkin arguments, $\Phi$ is in fact also
$\mathcal{B}^W_t/\mathcal{B}^Y_t\otimes\mathcal{B}^Z_t$-measurable,
and on any given filtered probability space
$(\Omega,\mathcal{F},\mathcal{F}_t,\mathbf{P})$ satisfying the usual
conditions with $W$ being a Brownian motion on it,
\begin{align*}
\Phi^Y(W_t)=&\ \phi(W_T)+\int_t^Th(s,\Phi^Z(W_s))ds\\
&\ + \int_t^T\sum_{j=1}^d\Phi^{Z,j}(W_s)f^j(s,\Phi^{Z,j}(W_s))ds
-\int_t^T\sum_{j=1}^d\Phi^{Z,j}(W_s)dW_s^j,
\end{align*}
so $\Phi$ is a strong solution.
\end{proof}

\section{Optimal portfolio in incomplete markets}

In this section our main aim is to demonstrate how the functional
differential equation approach and the nonlinear Girsanov's
transformation can be used in finance. Specifically we consider an
example of optimal portfolio problems in incomplete markets which is
often used in the indifference valuation.

\begin{assumption} (the probability space)
Let $(\Omega,\mathcal{F},\mathbf{P})$ be a complete probability
space which is to be determined, and $\{\mathcal{F}_t\}$ be its
associated filtration satisfying the usual conditions, which is also
to be determined.
\end{assumption}

\begin{assumption} (the market)
The market is built with three assets: a risk-free bond with zero
interest rate, a tradeable asset and a nontradeable asset. The
pricing dynamic of the tradeable asset satisfies the following SDE
on the above given probability space:
\begin{equation}\label{chap5:dynforS}
\left\{
\begin{array}{ll}
dS_t/S_t=\mu_t^Sdt+\bar{\sigma}_t^Sd\bar{W}_t,\\
S_0=s,
\end{array}
\right.
\end{equation}
and the dynamic of the nontradeable asset follows
\begin{equation}\label{chap5:dynforV}
\left\{
\begin{array}{ll}
dV_t/V_t=\mu_t^Vdt+\sigma_t^VdW_t^V+\bar{\sigma}_t^Vd\bar{W}_t,\\
V_0=v
\end{array}
\right.
\end{equation}
on the same given probability space, where
$\mathbf{W}=(W^V,\bar{W})$ is a two-dimensional Brownian motion to
be determined. $\sigma^V$ and $\mu^i,\bar{\sigma}^i$ for $i=S,V$, as
the market coefficients, are bounded and continuous functions.
\end{assumption}

\begin{assumption} (the investor)
The investor has an exponential utility function depending on
his/her terminal wealth, which has the form:
$$U(x)=-e^{-\gamma x},\ \ \ \text{for}\ x\in R,$$
where $\gamma\geq 0$ representing the degree of the investor's risk
aversion.
\end{assumption}

\begin{assumption} (the trading strategy)
The investor, with initial wealth $x$, invests in the tradeable
asset and the risk-free bond during the time period $[0,T]$. Let
$\pi$ be the amount of money invested in the tradeable asset. We
assume $\pi$ is taken from the following admissible set, which of
course depends on the above given probability space.
$$\mathcal{A}_{ad}:=\{\pi:[0,T]\times\Omega\rightarrow R:\ \pi\ \text{is}\
\mathcal{F}_t\text{-adapted},\ \text{self-financing and}\
||\pi||_{H^2[0,T]}<\infty.\}$$ The dynamic of the investor's wealth
process, denoted by $X^{x}(\pi)$, follows
\begin{equation}\label{chap5:dynforX}
\left\{
\begin{array}{ll}
dX_t^x(\pi)=\pi_t(\mu_t^Sdt+\bar{\sigma}_t^Sd\bar{W}_t),\\[+0.1cm]
X_0^x(\pi)=x.
\end{array}
\right.
\end{equation}
\end{assumption}

\begin{assumption} (the cost functional)
At time $t=T$, the investor gets the total amount $X_T^{x}(\pi)$
plus a random endowment $g(V_T,S_T)$, where $g$ is assumed to be
Lipschitz continuous and uniform bounded. The investor decides the
optimal trading strategy to maximize the following cost functional:
$$\sup_{\pi\in\mathcal{A}_{ad}}E^{\mathbf{P}}\left[-e^{-\gamma(X_T^{x}(\pi)+g(V_T,S_T))}\right].$$
Here we use the superscript $\mathbf{P}$ to emphasize the
expectation is taken under the probability measure $\mathbf{P}$,
which is to be determined.
\end{assumption}

The random endowment $g$ depends not only on the nontradeable asset
$V$ but also on the tradeable asset $S$, which distinguishes the
current problem from the ones usually considered in the literature.
In the book edited by Carmona \cite{Carmona}, this problem is even
called an open problem (\cite{HH2009}). Such form of random
endowment actually appears naturally when one wants to consider the
credit risk of options traded in OTC markets (see Henderson and
Liang \cite{HL2010}). We also emphasize the well known Cole-Hopf
transformation does not help to deduce the closed-from solutions in
our setting.

In the following we give the definition of weak admissible trading
strategy and the corresponding weak formulation of optimal portfolio
problems. For the weak formulation of general stochastic control
problems, we refer to Yong and Zhou \cite{MR1696772}.

\begin{definition}\label{chap5:def} A triple $(\Omega,\mathcal{F},\mathbf{P})$
$\{\mathcal{F}_t\}$ and $(\pi,\mathbf{W})$ is called a weak
admissible trading strategy if

\noindent\textbf{(1)} $(\Omega,\mathcal{F},\mathbf{P})$ is a
complete probability space with the filtration $\{\mathcal{F}_t\}$
satisfying the usual conditions;

\noindent\textbf{(2)} $\mathbf{W}$ is a Brownian motion, and the
increment $\{\mathbf{W}_u-\mathbf{W}_t:t\leq u\leq T\}$ must be
independent of $\sigma$-algebra $\mathcal{F}_t$;

\noindent\textbf{(3)} $\pi$ is taken from the admissible set
$\mathcal{A}_{ad}.$

The set of all weak admissible trading strategies is denoted as
$\mathcal{A}_{ad}^W$, and a generic element in such weak admissible
set $\mathcal{A}_{ad}^W$ is denoted as $\Pi$. The investor decides
the optimal weak admissible trading strategy $\Pi$ in order to
maximize his/her cost functional:
\begin{equation}\label{chap5:valuefunction}
\sup_{\Pi\in\mathcal{A}_{ad}^W}E^{\mathbf{P}}\left[-e^{-\gamma(X_T^{x}(\pi)+g(V_T,S_T))}\right].
\end{equation}
\end{definition}

\begin{remark}
The motivation of introducing the above weak formulation of optimal
portfolio problems is more from mathematics rather than finance.
Later We will use the martingale optimality principle to deduce a
quadratic BSDE as the characterization of the optimal portfolio, and
we will look for the weak solution of such quadratic BSDE. The
probability space will be chosen from the weak solution of the
associated quadratic BSDE.
\end{remark}

Next we use the martingale optimality principle to characterize the
optimal portfolio. For a given filtered probability space
$(\Omega,\mathcal{F},\mathcal{F}_t,\mathbf{P})$ with a
two-dimensional Brownian motion $\mathbf{W}=(W^V,\bar{W})$, all of
which are to be determined, we want to construct a family of
stochastic processes
$$(-e^{-\gamma(X_t^{x}(\pi)+Y_t)})_{t\in[0,T]}$$
such that\\[-0.3cm]

\noindent\textbf{(1)} the process
$(-e^{-\gamma(X_t^{x}(\pi)+Y_t)})_{t\in[0,T]}$ is a supermartingale
for any $\pi\in\mathcal{A}_{ad}$, and there exists an optimal
$\pi^{*}\in\mathcal{A}_{ad}$ such that
$(-e^{-\gamma(X_t^{x}(\pi^*)+Y_t)})_{t\in[0,T]}$ is a martingale;

\noindent\textbf{(2)} the auxiliary process $(Y_t)_{t\in[0,T]}$ has
the
terminal value $Y_T=g(V_T,S_T)$.\\[-0.3cm]

If such auxiliary process $(Y_t)_{t\in[0,T]}$ and the optimal
$\pi^{*}$ exist, then we have
$$E^{\mathbf{P}}\left[-e^{-\gamma(X_T^{x}(\pi)+Y_T)}\right]\leq
-e^{-\gamma(x+Y_0)}, \ \ \ \text{for\ any}\
\pi\in\mathcal{A}_{ad},$$ and
$$E^{\mathbf{P}}\left[-e^{-\gamma(X_T^{x}(\pi^{*})+Y_T)}\right]=
-e^{-\gamma(x+Y_0)}, \ \ \ \text{for\ optimal}\
\pi^{*}\in\mathcal{A}_{ad}.$$ Therefore
\begin{align*}
\sup_{\pi\in\mathcal{A}_{ad}}E^{\mathbf{P}}\left[-e^{-\gamma(X_T^{x}(\pi)+Y_T)}\right]
&=E^{\mathbf{P}}\left[-e^{-\gamma(X_T^{x}(\pi^{*})+Y_T)}\right]\\
&=-e^{-\gamma(x+Y_0)}.
\end{align*}

Note that the filtered probability space
$(\Omega,\mathcal{F},\mathcal{F}_t,\mathbf{P})$ and the Brownian
motion $\mathbf{W}$ are still to be determined. Next we use the weak
solution of a quadratic BSDE to characterize the auxiliary processes
$Y$ and $\pi^*$, which also provides us with the filtered
probability space $(\Omega,\mathcal{F},\mathcal{F}_t,\mathbf{P})$
and the Brownian motion $\mathbf{W}$.

\begin{theorem}\label{Prop5.1} Let $(\Omega,\mathcal{F},\mathbf{P})$, $\{\mathcal{F}_t\}$
and $(Y,\mathbf{Z},\mathbf{W})$ (with $\mathbf{Z}=(Z^V,\bar{Z})$) be
the weak solution to the following quadratic BSDE:
\begin{equation}\label{chap5:qbse1}
Y_t=g(V_T,S_T)-\int_t^Tf_sds-\int_t^T(Z_s^VdW_s^V+\bar{Z}_sd\bar{W}_s)
\end{equation}
with
$$f_t=\frac{\gamma}{2}(Z_t^V)^2+\frac{\mu_t^S}{\bar{\sigma}_t^S}\bar{Z}_t-\frac{(\mu_t^S)^2} {2\gamma(\sigma_t^S)^2}.$$ Then the value
function of the optimal portfolio problem
(\ref{chap5:valuefunction}) is given by
$$-e^{-\gamma(x+Y_0)},$$
and the optimal weak admissible trading strategy $\Pi^*$ is the
triple $(\Omega,\mathcal{F},\mathbf{P})$, $\{\mathcal{F}_t\}$ and
$(\pi^*,\mathbf{W})$ with
\begin{equation}\label{optimaltradingpi}
\pi^{*}_t=-\bar{Z}_t+\frac{\mu_t^S} {\gamma(\bar{\sigma}_t^S)^2}.
\end{equation}
\end{theorem}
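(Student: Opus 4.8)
The plan is to prove the theorem via the martingale optimality principle sketched just before the statement, using the given weak solution $(Y,\mathbf{Z},\mathbf{W})$ to supply the filtered probability space and Brownian motion that the Assumptions deliberately leave undetermined. The weak-solution conditions of Definition \ref{Def} furnish exactly the requirements (1)--(3) of Definition \ref{chap5:def} (a complete space, a Brownian motion $\mathbf{W}$ whose increments are independent of $\mathcal{F}_t$), so the triple $(\Omega,\mathcal{F},\mathbf{P})$, $\{\mathcal{F}_t\}$ is admissible. For each $\pi\in\mathcal{A}_{ad}$ I would introduce $R^{\pi}_t=-e^{-\gamma(X^{x}_t(\pi)+Y_t)}$. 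Because $Y_T=g(V_T,S_T)$ we have $R^{\pi}_T=-e^{-\gamma(X^{x}_T(\pi)+g(V_T,S_T))}$, while $R^{\pi}_0=-e^{-\gamma(x+Y_0)}$ is deterministic. The entire theorem then reduces to proving that $R^{\pi}$ is a supermartingale for every admissible $\pi$ and a genuine martingale for $\pi^{*}$.

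First I would apply It\^o's formula. Writing $\Xi_t=X^{x}_t(\pi)+Y_t$ and combining $dX^{x}_t(\pi)=\pi_t\mu^S_t\,dt+\pi_t\bar{\sigma}^S_t\,d\bar{W}_t$ with $dY_t=f_t\,dt+Z^V_t\,dW^V_t+\bar{Z}_t\,d\bar{W}_t$, the drift of $R^{\pi}$ is $R^{\pi}_t\,A_t(\pi)$ where
\[
A_t(\pi)=-\gamma\big(\pi_t\mu^S_t+f_t\big)+\frac{\gamma^2}{2}\Big((\pi_t\bar{\sigma}^S_t+\bar{Z}_t)^2+(Z^V_t)^2\Big).
\]
Since $R^{\pi}_t<0$, the supermartingale property is equivalent to $A_t(\pi)\ge 0$. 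Viewing $A_t(\cdot)$ as a scalar quadratic in $\pi_t$ and completing the square, the minimiser is the process $\pi^{*}$ of (\ref{optimaltradingpi}) and the minimal value is exactly $0$; this vanishing is precisely the algebraic identity that the special driver $f_t$ is engineered to produce, and I would record it as the key computation. Consequently $A_t(\pi)\ge A_t(\pi^{*})=0$, which yields the supermartingale inequality for all admissible $\pi$ and equality for $\pi^{*}$.

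Next I would check that $\pi^{*}$ is admissible and that $R^{\pi^{*}}$ is a true martingale rather than merely local. Admissibility follows because, by (\ref{optimaltradingpi}), $\pi^{*}$ is the sum of a bounded process and a bounded multiple of $\bar{Z}$, and $\bar{Z}\in H^2$ by the definition of the weak solution; hence $\pi^{*}\in\mathcal{A}_{ad}$. For the martingale property I would use that, with $A_t(\pi^{*})=0$, the process $R^{\pi^{*}}$ is the Dol\'eans-Dade exponential $R_0\,\mathscr{E}\big(-\gamma\int(\pi^{*}\bar{\sigma}^S+\bar{Z})\,d\bar{W}-\gamma\int Z^V\,dW^V\big)$. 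At $\pi^{*}$ the first integrand equals the bounded process $\mu^S/(\gamma\bar{\sigma}^S)$, while $Z^V$ is a $BMO$-martingale integrand by the same a priori estimate as in Lemma \ref{Lemma2} (applied under $\mathbf{P}$, exploiting the boundedness of $Y$, which holds because $g$ is bounded). Thus the driving integral is a $BMO$-martingale and its stochastic exponential is uniformly integrable, so $R^{\pi^{*}}$ is a true martingale.

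The main obstacle I anticipate is upgrading the local supermartingale property to a genuine supermartingale for an arbitrary $\pi\in\mathcal{A}_{ad}$: here $\pi$ is only square-integrable, so $e^{-\gamma X^{x}(\pi)}$ need not be $BMO$-controlled and Kazamaki's criterion is unavailable. I would handle this by exploiting the sign $R^{\pi}\le 0$ together with the boundedness of $Y$, writing $-R^{\pi}=e^{-\gamma Y}e^{-\gamma X^{x}(\pi)}$ with $e^{-\gamma Y}$ bounded, and using the $H^2$-control of the martingale part of $X^{x}(\pi)$ to run a class-D/uniform-integrability argument along a localising sequence that passes the supermartingale inequality to the limit. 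Once both properties hold, the chain $\sup_{\pi\in\mathcal{A}_{ad}^W}E^{\mathbf{P}}[R^{\pi}_T]=E^{\mathbf{P}}[R^{\pi^{*}}_T]=R_0=-e^{-\gamma(x+Y_0)}$ delivers simultaneously the announced value function $-e^{-\gamma(x+Y_0)}$ and the optimality of the weak admissible strategy $\Pi^{*}=(\Omega,\mathcal{F},\mathbf{P})$, $\{\mathcal{F}_t\}$, $(\pi^{*},\mathbf{W})$.
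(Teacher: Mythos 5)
Your proposal follows exactly the paper's skeleton: the martingale optimality principle, It\^o's formula applied to $-e^{-\gamma(X_t^x(\pi)+Y_t)}$, and completion of the square in $\pi_t$ so that the drift is nonnegative for all $\pi$ and vanishes at $\pi^*$. The difference is that the paper's proof \emph{stops} at that point --- it reads off $f$ and $\pi^*$ from the variational inequality and never addresses admissibility of $\pi^*$, the true-martingale property of $R^{\pi^*}$, or the supermartingale property for a general admissible $\pi$ --- whereas you attempt the full verification. Your BMO argument for $R^{\pi^*}$ is sound in spirit and parallels Lemma \ref{Lemma2}, though under $\mathbf{P}$ the driver $f$ is quadratic in $Z^V$, so one needs the standard quadratic-BSDE BMO estimate (using boundedness of $Y$) rather than the literal computation of Lemma \ref{Lemma2}.

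However, the "key computation" you defer does not close as you assert. With the stated driver $f_t$ (reading $(\sigma^S_t)^2$ as $(\bar{\sigma}^S_t)^2$), completing the square in your $A_t(\pi)$ gives
\[
A_t(\pi)=\frac{\gamma^2(\bar{\sigma}_t^S)^2}{2}\left(\pi_t+\frac{\bar{Z}_t}{\bar{\sigma}_t^S}-\frac{\mu_t^S}{\gamma(\bar{\sigma}_t^S)^2}\right)^2,
\]
so the minimum is indeed $0$, but the minimiser is $\pi_t=-\bar{Z}_t/\bar{\sigma}_t^S+\mu_t^S/(\gamma(\bar{\sigma}_t^S)^2)$, which coincides with (\ref{optimaltradingpi}) only if $\bar{\sigma}^S\equiv 1$. (The paper's own passage from its It\^o display to its variational inequality contains the same slip: the cross term $2\bar{\sigma}^S_t\bar{Z}_t\pi_t$ produced by It\^o is silently replaced by $2(\bar{\sigma}^S_t)^2\bar{Z}_t\pi_t$.) This is not cosmetic for your argument: at the $\pi^*$ of (\ref{optimaltradingpi}) one has $A_t(\pi^*)=\frac{\gamma^2(\bar{\sigma}^S_t)^2}{2}\bar{Z}_t^2(1/\bar{\sigma}^S_t-1)^2\neq 0$ in general, so $R^{\pi^*}$ is not a martingale and the chain of equalities at the end of your proof breaks.

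The second genuine gap is the step you yourself flag: upgrading the local supermartingale property of $R^{\pi}$ to a true supermartingale for arbitrary $\pi\in\mathcal{A}_{ad}$ cannot be done by "$H^2$-control plus uniform integrability along a localising sequence." To pass $E[R^{\pi}_{T\wedge\tau_n}]\leq R_0$ to the limit you need uniform integrability of $\{e^{-\gamma(X^x_{T\wedge\tau_n}(\pi)+Y_{T\wedge\tau_n})}\}_n$ (Fatou runs in the wrong direction), and square-integrability of $\int\pi\bar{\sigma}^S d\bar{W}$ only yields Chebyshev-type tails $\mathbf{P}(X_{T\wedge\tau_n}<-K)\leq C/K^2$, which is powerless against the factor $e^{\gamma K}$: exponential moments do not follow from $H^2$ bounds. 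This is precisely why the exponential-utility literature (e.g.\ Hu, Imkeller and M\"uller) builds a class-D requirement into the definition of admissibility, or else argues via an entropy/duality inequality against the martingale measure generated by $R^{\pi^*}$. With $\mathcal{A}_{ad}$ defined by the $H^2$ norm alone, as in this paper, your proposed argument fails and a different one is required; the paper itself never confronts the issue because its proof only derives $f$ and $\pi^*$ from the assumed optimality structure and omits the verification direction entirely.
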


\begin{proof} On a given filtered probability space
$(\Omega,\mathcal{F},\mathcal{F}_t,\mathbf{P})$ with the Brownian
motion $\mathbf{W}$, we suppose $(Y_t)_{t\in[0,T]}$ satisfies the
following BSDE:
$$Y_t=g(V_T,S_T)-\int_t^Tf_sds-\int_t^T(Z_s^VdW_s^V+\bar{Z}_sd\bar{W}_s),$$
where the driver $f$ is to be determined. By applying It\^o's
formula to $e^{-\gamma(X_t^{x}(\pi)+Y_t)}$, we obtain
\begin{align*}
&de^{-\gamma(X_t^{x}(\pi)+Y_t)}\\
=&\ e^{-\gamma(X_t^{x}(\pi)+Y_t)}\left\{-\gamma(dX_t^{x}(\pi)+
dY_t)+\frac{\gamma^2}{2}d[X^{x}(\pi)+Y,X^{x}(\pi)+Y]_t
\right\}\\
=&\ e^{-\gamma(X_t^{x}(\pi)+Y_t)}\left\{-\gamma\mu_t^S\pi_t-\gamma
f_t+\frac{\gamma^2}{2}\left[(\bar{\sigma}_t^S)^2\pi_t^2+(Z_t^V)^2+
(\bar{Z}_t)^2\right.\right.\\
&\left.\left.+2\bar{\sigma}_t^S\bar{Z}_t\pi_t\right]\right\}dt+
\text{martingale term}.
\end{align*}
Since $(-e^{-\gamma(X_t^{x}(\pi)+Y_t)})_{t\in[0,T]}$ is
supermartingale for any $\pi\in\mathcal{A}_{ad}$, and a martingale
for optimal $\pi^{*}\in\mathcal{A}_{ad}$, we must have $f_t$ and
$\pi^*$ such that
\begin{equation*}
\frac{\gamma^2}{2}(\bar{\sigma}_t^S)^2\left(\pi_t+
\bar{Z}_t-\frac{\mu_t^S} {\gamma(\bar{\sigma}_t^S)^2}\right)^2
+\frac{\gamma^2}{2}(Z_t^V)^2+\frac{\gamma\mu_t^S}{\bar{\sigma}_t^S}\bar{Z}_t-\frac{(\mu_t^S)^2}
{2(\sigma_t^S)^2}-\gamma f_t\geq 0
\end{equation*}
for any $\pi\in\mathcal{A}_{ad}$, and equality holds for optimal
$\pi^*$. Therefore by solving the above variational inequality, we
obtain
$$f_t=\frac{\gamma}{2}(Z_t^V)^2+\frac{\mu_t^S}{\bar{\sigma}_t^S}\bar{Z}_t-\frac{(\mu_t^S)^2} {2\gamma(\sigma_t^S)^2},$$ and
$$\pi^{*}_t=-\bar{Z}_t+\frac{\mu_t^S} {\gamma(\bar{\sigma}_t^S)^2}.$$
\end{proof}

In the following we will employ the functional differential equation
approach and the nonlinear Girsanov's transformation to find the
weak solution of BSDE (\ref{chap5:qbse1}). Since the coefficients
satisfy Condition \ref{Cond2.5}, by Theorem \ref{maintheorem2}, the
weak solution we will find is pathwise unique, and moreover the
strong solution also exists.

The idea is to use the strong solution of the following FBSDE
(\ref{chap5:MainFBSDE22}) to construct the weak solution of BSDE
(\ref{chap5:qbse1}). Let's start with a Brownian motion
$\mathbf{B}=(B^V,\bar{B})$ on $(\Omega,\mathcal{F},\mathbf{Q})$ with
the filtration $\{\mathcal{F}_t\}$ satisfying the usual conditions,
and consider the following FBSDE:
\begin{eqnarray}\label{chap5:MainFBSDE22}
    \left\{
    \begin{array}{lll}
    d\ln
    V_t&=&\displaystyle\left\{\mu_t^V-\frac12[(\sigma_t^V)^2+(\bar{\sigma}_t^V)^2]\right\}dt
    -\sigma_t^V\frac{\gamma}{2}Z_t^Vdt\\[+0.5cm]
    &&-\bar{\sigma}_t^V\displaystyle\frac{\mu_t^S}{\bar{\sigma}_t^S}dt
    +\sigma_t^VdB_t^V+\bar{\sigma}_t^Vd\bar{B}_t,\\[+0.5cm]
    \ln V_0&=&\ln v,\\[+0.5cm]
    d\ln
    S_t&=&-\displaystyle\frac12(\bar{\sigma}_t^S)^2dt+\bar{\sigma}_t^Sd\bar{B}_t,\\[+0.5cm]
    \ln S_0&=&\ln s,\\[+0.5cm]
    dY_t&=&-\displaystyle\frac{(\mu_t^S)^2}{2\gamma(\bar{\sigma}_t^S)^2}dt+Z_t^{V}dB_t^V+\bar{Z}_td\bar{B}_t,\\[+0.5cm]
    Y_T&=&\displaystyle g(e^{\ln V_T},e^{\ln S_T}).
    \end{array}\right.
\end{eqnarray}
Note that FBSDE (\ref{chap5:MainFBSDE22}) is linear and the
coefficients satisfy Condition \ref{Cond2.4}, so by Lemma
\ref{LemmaEQU}, we know there exists a unique solution
$(Y,\mathbf{Z},\ln V,\ln S)\in\mathcal{C}([0,T];R)\times
H^2([0,T];R^2)\times\mathcal{C}([0,T];R)\times\mathcal{C}([0,T];R).$

Based on the solution $(Y,\mathbf{Z})$, we define a new probability
measure $\mathbf{P}$ by
$$\frac{d\mathbf{P}}{d\mathbf{Q}}=\mathscr{E}(N),$$
where $\mathscr{E}(N)$ is the Dol\'eans-Dade exponential of $N$ with
$$N=\int_0^{\cdot}\frac{\gamma}{2}Z_t^VdB_t^V+\int_0^{\cdot}\frac{\mu_t^S}{\bar{\sigma}_t^S}
d\bar{B}_t.$$ By Lemma \ref{Lemma2}, we know $\mathbf{P}$ is indeed
a probability measure. Under the new probability measure
$\mathbf{P}$, by Girsanov's theorem,
$\mathbf{W}=\mathbf{B}-[\mathbf{B},N]$ is a Brownian motion with
\begin{equation*}
\left\{
\begin{array}{ll}
W^V=B^V-\displaystyle\int_0^{\cdot}\frac{\gamma}{2}Z_t^Vdt,\\[+0.5cm]
\bar{W}=\bar{B}-\displaystyle\int_0^{\cdot}\frac{\mu_t^S}{\bar{\sigma}_t^S}dt.
\end{array}\right.
\end{equation*}

Under the probability measure $\mathbf{P}$ and with the Brownian
motion $\mathbf{W}$, let's rewrite the backward equation in FBSDE
(\ref{chap5:MainFBSDE22}):
\begin{align*}
dY_t=&-\displaystyle\frac{(\mu_t^S)^2}{2\gamma(\bar{\sigma}_t^S)^2}dt+Z_t^{V}\left(dW_t^V+\frac{\gamma}{2}Z_t^Vdt\right)
+
\bar{Z}_t\left(d\bar{W}_t+\frac{\mu_t^S}{\bar{\sigma}_t^S}dt\right)\\[+0.1cm]
    =&\ f_tdt+Z_t^VdW_t^V+\bar{Z}_td\bar{W}_t
\end{align*}
with $Y_T=g(V_T,S_T)$, and rewrite the forward equations in FBSDE
(\ref{chap5:MainFBSDE22}):
\begin{equation*}
dV_t/V_t=\mu_t^Vdt+\sigma_t^VdW_t^V+\bar{\sigma}_t^Vd\bar{W}_t,
\end{equation*}
with
\begin{equation*}
dS_t/S_t=\mu_t^Sdt+\bar{\sigma}_t^Sd\bar{W}_t.
\end{equation*}
Therefore the triple $(\Omega,\mathcal{F},\mathbf{P})$,
$\{\mathcal{F}_t\}$ and $(Y,\mathbf{Z},\mathbf{W})$ is just one weak
solution we want to find.\\

\noindent {\textbf{Acknowledgements.}} The research was supported in
part by EPSRC\ grant EP/F029578/1 and by the Oxford-Man Institute.

\bibliographystyle{plain}
\bibliography{myreference}

\noindent {\small \textsc{Gechun Liang, Terry Lyons and Zhongmin
Qian}}

\noindent{\small Oxford-Man Institute and Mathematical Institute}

\noindent{\small University of Oxford}

\noindent{\small Oxford OX2 6ED, U.K.}

\vskip0.3truecm

\noindent {\small {Email: \texttt{liangg@maths.ox.ac.uk;
tlyons@maths.ox.ac.uk; qianz@maths.ox.ac.uk}}}

\end{document}